\newtheorem{theorem}{Theorem}[section]
\newtheorem{lemma}[theorem]{Lemma}
\newtheorem{corollary}[theorem]{Corollary}
\newtheorem{proposition}[theorem]{Proposition}
\DeclareMathOperator{\conv}{conv}
\DeclareMathOperator{\cl}{cl}
\newcommand{\R}{\mathbb{R}}
\newcommand{\BR}{ {\overline{\mathbb{R}} }  }
\newcommand{\inner}[2]{\langle{#1},{#2}\rangle}
\newcommand{\norm}[1]{\|#1\|}
\newcommand{\tos}{\rightrightarrows} 
\begin{document}

\title{Geometric properties of maximal monotone operators and  convex
functions which may represent them}

\author{B. F. Svaiter\thanks{IMPA, Estrada Dona Castorina 110, 
   22460-320 Rio de Janeiro, Brazil
 (email: benar@impa.br). 
This work was Partially supported by CNPq grants no.\
303583/2008-8,
302962/2011-5,
480101/2008-6,
474944/2010-7,
and FAPERJ grants E-26/102.821/2008, E-26/102.940/2011.
}}

\maketitle

\begin{abstract}
  We study the relations between some geometric properties
  of maximal monotone operators and generic geometric and analytical
  properties of the functions on the associate Fitzpatrick family of
  convex representations.
  We also investigate under which conditions a convex function represents a
  maximal monotone operator with bounded range
  and provide an example of a non type (D) operator on this class. 
  \\
  \\
  Keywords: Fitzpatrick function, maximal monotone operator, bounded
  range, non-reflexive Banach spaces
  \\
  \\
  2000 Mathematics Subject Classification: 47H05, 49J52, 47N10
\end{abstract}

\maketitle

\section{Introduction}
\label{sec:int}

Fitzpatrick functions~\cite{MR1009594} are convex functions which
represent maximal monotone operators.  A natural question is whether
geometric features of maximal monotone operators are related with
generic analytical or geometric features of their convex
representations.  We present in this work some answers to this
question.  Using these results, we prove that the convex hull of the
range of bounded domain maximal monotone operators are weak-$*$ dense.

Another natural question is whether a convex function represents a
maximal monotone operator.  Although this question is reasonably
settled in reflexive Banach spaces~\cite{MR1974634}, in non-reflexive
Banach spaces we have, up to now, what seems to be partial
results~\cite{MR2489609,MR2583901,MR2731293,MR2675662} ( see
also~\cite{MR2559952,MR2777606}). In this work we answer this question
for the case of bounded-range maximal monotone operators and show
that there exist non-type (D) operators in this class.

This work is heavily based on results and technique previously developed
in collaboration with J. E. Mart\'\i
nez-Legaz~\cite{MR2128696,MR2361859}, M. Marques
Alves~\cite{MR2489609,MR2583901,MR2731293,MR2675662,MR2559952,MR2777606}
and O. Bueno~\cite{nodliniso}.

This work is organized as follows. In Section~\ref{sec:nbr} we
establish the notation and recall some basic results.  In
Section~\ref{sec:dom} we analyze some properties of the domains of
extended real valued functions defined in the Cartesian product of a
Banach space with its dual.  In Section~\ref{sec:gp} we characterize
the properties of the Fitzpatrick families associated to bounded-range
and bounded-domain maximal monotone operators.  In
Section~\ref{sec:main} we characterize the convex functions which
represent bounded range maximal monotone operators and provide an
example of a non-type (D) operator in this class.

Throughout this work, $\BR$ stands for the extended real set
$\R\cup\{\infty,-\infty\}$ and, 
in a Cartesian product $A\times B$ we will use the notation $P_1$ and
$P_2$ for the canonical projections onto the first and onto the second
term of the product respectively, that is
\[
P_1:A\times B\to A, \;P_1(a,b)=a,\quad
P_2:A\times B\to B, \;P_2(a,b)=b.
\]

\section{Notation and Basic Results}
\label{sec:nbr}

Let $X$ be a real Banach space, possible non-reflexive, with
topological dual denoted by $X^*$ and endowed with the canonical
dual norm. 
We will use the notation $\inner{\cdot}{\cdot}$ for the duality
product in $X\times X^*$,
\[
  \inner{x}{x^*}=\inner{x^*}{x}=x^*(x),\qquad
  x\in X,x^*\in X^*.
\]
In this work, 
 $X^{**}$, $X^{***}$, etc., stands for
$(X^*)^*$, $((X^*)^*)^*$, etc.
Recall that $X^*$, $X^{**}$,\dots,\, are Banach spaces.
We will identify $X$ with its image by the canonical injection of $X$
into $X^{**}$, that is, each $x\in X$ is identified with the
(continuous, linear) functional
\[
X^*\to\R,\quad x^*\mapsto\inner{x}{x^*}.
\]
The same identification will be used for $X^*$, $X^{**}$, etc. with respect to 
$X^{***}$, $X^{****}$, etc.
The Banach space $X$ is \emph{reflexive} if this
injection, of $X$ into $X^{**}$, is onto, and non-reflexive otherwise.
The weak-$*$ topology of $X^*$ is the smallest topology (in $X^*$) in
which the functional $x^*\mapsto \inner{x}{x^*}$ are continuous for
all $x\in X$.

A closed ball in $X$, with radius $r$ and center $0$,
will be denoted by $B_X[r]$,
\[
B_X[r]=\{x\in X\;|\; \norm{x}=r\}.
\]
The closure and convex hull of $C\subseteq X$ will be
denoted by $\cl\, C$ and $\conv C$, respectively.  The closure of
$C\subset X^*$ in the weak-$*$ topology will be denoted by
$\cl_{w*}(C)$. 
The set of 
\emph{directions of recession} of $C\subseteq X$
or \emph{recession cone} of $C$ is defined as
\[
0^+C=\{u\in X\;|\; x+\lambda u\in C,\quad\forall x\in C\}.
\]
The \emph{indicator function} of $C\subseteq X$ is
\begin{equation}
  \label{eq:if}
\delta_C:X\to\BR,\qquad\delta_{C}(x)=
\begin{cases}
  0,& x\in C\\
  \infty,&\text{otherwise}.
\end{cases}
\end{equation}

A point-to-set operator $T:X\tos X^*$ is a relation $T\subset X\times
X^*$ and
\[
 T(x)=\{x^*\in X^*\;|\; (x,x^*)\in T\},\qquad x\in X.
\]
The domain and range of $T:X\tos X^*$ are (respectively)
\[
D(T)=\{x\in X\;|\; T(x)\neq\emptyset\}=P_1T,\qquad
R(T)=\{x^*\in X\;|\; \exists x\in X,\, x^*\in T(x)\}=P_2T.
\]
An operator $T:X\tos X^*$ is \emph{monotone} if
\[
\inner{x-y}{x^*-y^*}\geq 0,\qquad \forall (x,x^*),(y,y^*)\in T
\]
and it is \emph{maximal monotone} if it is monotone and maximal in the
family of monotone operators with respect to the inclusion.  A maximal
monotone $T:X\tos X^*$ is of \emph{Gossez type (D)}~\cite{MR0313890}
if any point in the set
\[
\{(x^{**},x^{*})\in X^{**}\times X^*\;|\;
\inner{x^{**}-y}{x^*-y^*}\geq 0,\quad\forall (y,y^*)\in T\}
\]
is the weak-$*\times$strong limit of a \emph{bounded} net of points in $T$.

Let $f:X\to\BR$. The function $f$ is \emph{proper} if
$f\not\equiv\infty$ and $f>-\infty$.  The function $f$ is
\emph{closed} if it is lower semi-continuous and $\cl f$, the closure
of $f$, is the largest closed function majorized by $f$.
The \emph{domain} of $f$ is
\[
D(f)=\{x\in X\;|\; f(x)<\infty\}.
\]
We use the notation $\conv f$ for the lower convex envelop of $f$,
which is the largest convex function majorized by $f$. The \emph{conjugate}
of $f:X\to\BR$ is
\[
f^*:X^*\to\BR,\quad f^*(x^*)=\sup_{x\in X}\inner{x}{x^*}-f(x).
\]
Latter on we will need the following elementary result
\begin{proposition}
  \label{pr:e}
  Let $f:X\to\BR$ be a proper closed convex function. For any $\lambda\geq 0$,
  $z^*,w^*\in X^*$ 
  \[
  f^*(z^*)+\lambda\delta_{D(f)}^*(w^*)\geq f^*(z^*+\lambda w^*)
  \]
  where $\delta_{D(f)}^*$ stands for $(\delta_{D(f)})^*$, the support function of
  $D(f)$.
\end{proposition}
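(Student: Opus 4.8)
The plan is simply to unwind the definition of the conjugate and to exploit the sign condition $\lambda\geq 0$. First I would dispose of the trivial cases: when $\lambda=0$ the asserted inequality reduces to $f^*(z^*)\geq f^*(z^*)$, and when $\delta_{D(f)}^*(w^*)=+\infty$ the left-hand side is $+\infty$ --- here one uses that $f$ is proper, so there is $x_0$ with $f(x_0)\in\R$, whence $f^*(z^*)\geq \inner{x_0}{z^*}-f(x_0)>-\infty$, and adding $+\infty$ to a value in $(-\infty,+\infty]$ still gives $+\infty$. It therefore remains to treat the case $\lambda>0$ and $\delta_{D(f)}^*(w^*)<\infty$.

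In that case, since $f$ is proper we have $D(f)\neq\emptyset$ and $f\equiv+\infty$ off $D(f)$, so for every $u^*\in X^*$ the supremum defining $f^*(u^*)$ is unchanged if taken only over $x\in D(f)$. Writing this for $u^*=z^*+\lambda w^*$ gives
\[
f^*(z^*+\lambda w^*)=\sup_{x\in D(f)}\bigl(\inner{x}{z^*}-f(x)+\lambda\inner{x}{w^*}\bigr).
\]
For each $x\in D(f)$ the definition of the support function yields $\inner{x}{w^*}\leq \delta_{D(f)}^*(w^*)$, and since $\lambda>0$, also $\lambda\inner{x}{w^*}\leq\lambda\delta_{D(f)}^*(w^*)$. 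Substituting this bound into the displayed supremum and extracting the additive constant,
\[
f^*(z^*+\lambda w^*)\leq \lambda\delta_{D(f)}^*(w^*)+\sup_{x\in D(f)}\bigl(\inner{x}{z^*}-f(x)\bigr)=\lambda\delta_{D(f)}^*(w^*)+f^*(z^*),
\]
which is the assertion. Equivalently, one may observe that $f=f+\lambda\delta_{D(f)}$ pointwise and apply the elementary inequality $(g+h)^*(a+b)\leq g^*(a)+h^*(b)$ with $g=f$, $h=\lambda\delta_{D(f)}$, $a=z^*$, $b=\lambda w^*$, together with $(\lambda\delta_{D(f)})^*(\lambda w^*)=\lambda\,\delta_{D(f)}^*(w^*)$.

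Since every step is a routine manipulation of suprema over the extended reals, I do not expect any genuine obstacle here; the only care needed is the bookkeeping of $+\infty$ values --- specifically, noting that properness of $f$ forces $f^*>-\infty$, and that restricting the defining supremum of $f^*$ to $D(f)$ is legitimate precisely because $D(f)\neq\emptyset$.
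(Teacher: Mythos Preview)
Your proof is correct and follows essentially the same approach as the paper's: both amount to the elementary observation that $f+\lambda\delta_{D(f)}=f$ together with the subadditivity inequality $(g+h)^*(a+b)\leq g^*(a)+h^*(b)$, which you even spell out explicitly as your alternative formulation. The paper's version is simply more compressed, writing the two Fenchel--Young inequalities and adding them without separately discussing the edge cases $\lambda=0$ or $\delta_{D(f)}^*(w^*)=+\infty$.
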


\begin{proof}
  See Appendix.
\end{proof}

In the study of convex functions in $X\times X^*$ it is convenient to
define the operator $\mathcal{J}$~\cite{MR1934748},
\begin{equation}
  \label{eq:j}
  \mathcal{J}:\BR^{X\times X^*}\to\BR^{X\times X^*},\quad
  (\mathcal{J}\,h)(x,x^*)=h^*(x^*,x).
\end{equation}
As remarked in~\cite{MR1934748}, 
the operator $\mathcal{J}$ is
a generalized Moreau conjugation by means of the (symmetric) coupling
function 
\[
\Phi:(X\times X^*)\times (X\times X^*)\to\R,\qquad
\Phi((x,x^*),(y,y^*))=\inner{x}{y^*}+\inner{y}{x^*}
\]
so that
\[
\mathcal{J}h(z)=h^\Phi(z)=\sup_{z'\in X\times X^*}
 \Phi(z,z')-h(z').
\]
Hence, we may call the operator $\mathcal{J}$ also the $\Phi$-conjugation.
It will also be useful to have a
notation for the (ordered) duality product, say
\begin{equation}
  \label{eq:pi}
\pi:X\times X^*\to\R,\quad \pi(x,x^*)=\inner{x}{x^*},
\end{equation}

Let $T:X\tos X^*$ be a maximal monotone operator.  Fitzpatrick family
$\mathcal{F}_T$ associated with $T$ is defined as
\begin{equation}
  \label{eq:fitz.fm}
  \mathcal{F}_T=\left\{h:X\times X^*\to\BR\;\left|
      \begin{array}{l}
        h\text{ is convex and closed}\\
       h(x,x^*)\geq \inner{x}{x^*},\quad\forall (x,x^*)\in X\times X^*\\
      (x,x^*)\in T\Rightarrow h(x,x^*)=\inner{x}{x^*}
      \end{array}
   \right\}\right. .
\end{equation}
The smallest function in $\mathcal{F}_T$ (which is always non-empty)
is Fitzpatrick minimal function $\varphi_T$
\begin{equation}
  \label{eq:fitz.fc}
  \varphi_T:X\times X^*\to\BR,\qquad
  \varphi_T(x,x^*)=\sup_{(y,y^*)\in T}\inner{x}{y^*}+\inner{y}{x^*}
  -\inner{y}{y^*}. 
\end{equation}
Any function in the family $\mathcal{F}_T$ characterizes $T$ in the
following sense.

\begin{theorem}[\mbox{\cite[Theorem 3.10]{MR1009594}}]
  \label{th:fitz}
  Let $T:X\tos X^*$ be maximal monotone. For any $h\in\mathcal{F}_T$,
  \begin{equation*}
    T=\{(x,x^*)\in X\times X^*\;|\; h(x,x^*)=\inner{x}{x^*}\}.
  \end{equation*}
\end{theorem}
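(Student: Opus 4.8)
The plan is to establish the two inclusions separately. The inclusion of $T$ in $\{(x,x^*)\mid h(x,x^*)=\inner{x}{x^*}\}$ is immediate, since it is precisely the third defining condition of the Fitzpatrick family $\mathcal{F}_T$ in~\eqref{eq:fitz.fm}. For the reverse inclusion, fix $(x,x^*)\in X\times X^*$ with $h(x,x^*)=\inner{x}{x^*}$; I must show $(x,x^*)\in T$, and by maximality of $T$ it is enough to prove that $(x,x^*)$ is monotonically related to $T$, that is, $\inner{x-y}{x^*-y^*}\geq 0$ for every $(y,y^*)\in T$.

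To this end, fix $(y,y^*)\in T$ and, for $t\in[0,1]$, put $(x_t,x_t^*)=(1-t)(x,x^*)+t(y,y^*)\in X\times X^*$. On one hand, convexity of $h$ together with the hypothesis $h(x,x^*)=\inner{x}{x^*}$ and the equality $h(y,y^*)=\inner{y}{y^*}$ (third condition of $\mathcal{F}_T$, valid since $(y,y^*)\in T$) yield $h(x_t,x_t^*)\leq (1-t)\inner{x}{x^*}+t\inner{y}{y^*}$. On the other hand, the second condition of $\mathcal{F}_T$ gives $h(x_t,x_t^*)\geq \inner{x_t}{x_t^*}$. Expanding $\inner{x_t}{x_t^*}$ by bilinearity and subtracting, the mixed and quadratic terms in $t$ collect into the identity $(1-t)\inner{x}{x^*}+t\inner{y}{y^*}-\inner{x_t}{x_t^*}=t(1-t)\inner{x-y}{x^*-y^*}$. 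Combining the two bounds therefore gives $0\leq t(1-t)\inner{x-y}{x^*-y^*}$ for all $t\in[0,1]$; choosing any $t\in(0,1)$ we conclude $\inner{x-y}{x^*-y^*}\geq 0$.

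Since $(y,y^*)\in T$ was arbitrary, $(x,x^*)$ is monotonically related to $T$, and maximality of $T$ forces $(x,x^*)\in T$, completing both inclusions. The only step that needs a moment's care is the elementary bilinear computation identifying the gap between the convexity bound and the bound $h\geq\pi$ with $t(1-t)\inner{x-y}{x^*-y^*}$; everything else is a direct application of the three defining properties of $\mathcal{F}_T$ and of maximality. I note that reflexivity of $X$ plays no role, and that one could alternatively first reduce to the case $h=\varphi_T$ (since $h\geq\varphi_T\geq\pi$ and $h(x,x^*)=\inner{x}{x^*}$ force $\varphi_T(x,x^*)=\inner{x}{x^*}$) and then simply unfold the supremum in~\eqref{eq:fitz.fc}; the convexity argument above has the advantage of not relying on the minimality of $\varphi_T$.
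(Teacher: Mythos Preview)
Your proof is correct. The inclusion $T\subseteq\{(x,x^*)\mid h(x,x^*)=\inner{x}{x^*}\}$ is indeed immediate from the third defining condition of $\mathcal{F}_T$, and your convexity argument for the reverse inclusion is clean: the identity
\[
(1-t)\inner{x}{x^*}+t\inner{y}{y^*}-\inner{x_t}{x_t^*}=t(1-t)\inner{x-y}{x^*-y^*}
\]
is a routine bilinear expansion, and together with $h\geq\pi$ and maximality of $T$ it yields the conclusion.

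As for comparison with the paper: the paper does not supply its own proof of this theorem. It is stated as a citation of Fitzpatrick's original result \cite[Theorem~3.10]{MR1009594} and used as background. So there is nothing to compare against in the present paper; your argument is in fact the standard one, and the alternative you mention at the end (reducing to $\varphi_T$ via $h\geq\varphi_T\geq\pi$ and then reading off monotone relatedness from the definition~\eqref{eq:fitz.fc}) is essentially Fitzpatrick's original route.
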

\noindent

For any maximal monotone $T:X\tos X^*$, the largest element of
$\mathcal{F}_T$ is ${\sigma}_T$~\cite{MR1934748}
\begin{equation}
  \label{eq:fitz.max}
  {\sigma}_T:X\times X\to\BR,\quad{\sigma}_T=\cl\,\conv\;(\pi+\delta_T)
\end{equation}
and
\begin{equation}
  \label{eq:usefull}
  \varphi_T (x,x^*)={{\sigma}_T}^*(x^*,x)=(\pi+\delta_T)^*(x^*,x),
  \qquad\forall (x,x^*)\in X\times X^*.
\end{equation}
The elementary properties of ${\sigma}_T$, together with its inclusion
in $\mathcal{F}_T$ will be instrumental for proving Lemma~\ref{lm:dtcl}.
Using \eqref{eq:j} we obtain a very simple reformulation of
the above relation,
\begin{equation}
  \label{eq:usefull.2}
  \varphi_T=\mathcal{J}{\sigma}_T=\mathcal{J}(\pi+\delta_T).
\end{equation}
The family $\mathcal{F}_T$ is invariant
under $\mathcal{J}$, that is $\mathcal{J}\left(\mathcal{F}_T\right)\subseteq
\mathcal{F}_T$. In particular, for any $h\in\mathcal{F}_T$,
\begin{equation}
  \label{eq:icb}
  h\geq \pi,\qquad\mathcal{J}h\geq \pi.  
\end{equation}
In \emph{reflexive} Banach spaces the two above inequalities are
a sufficient condition for a proper closed convex function $h$ to
represent a maximal monotone operator~\cite{MR1934748}. In
non-reflexive Banach spaces, up to now, additional conditions are
required on $h$~\cite{MR2489609,MR2583901,MR2731293,MR2675662}.  The
next result was essentially proved in~\cite[Theorem 3.1, Corollary
3.2]{MR2675662}. We will present it as stated in~\cite[Theorem
2.1]{QSUM}

\begin{theorem}
  \label{th:hhs}
  If $h:X\times X^*\to\BR$ is a closed convex function,
 $h\geq\pi$, $\mathcal{J}h\geq\pi$
 and, for some $x_0\in X$,
 \[
 \bigcup_{\lambda>0}\lambda (P_1D(h)-x_0)
 \]
 is a closed subspace, then
 \[
  T=\{\zeta\in X\times X^*\;|\; \mathcal{J}h(\zeta)=\pi(\zeta)\}
 \]
 is maximal monotone, $\mathcal{J}h\in\mathcal{F}_T$. If, additionally
 $h$ is lower semi-continuous in the strong$\times$weak-$*$ topology, then
 $h\in\mathcal{F}_T$.
\end{theorem}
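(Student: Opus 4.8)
The plan is to set $g:=\mathcal Jh$ and to analyse $T=\{\zeta\in X\times X^*\mid g(\zeta)=\pi(\zeta)\}$. Since $g(x,x^*)=h^*(x^*,x)=\sup_{(y,y^*)}\inner{y}{x^*}+\inner{x}{y^*}-h(y,y^*)$ is a supremum of functions that are affine and continuous for the strong$\times$weak-$*$ topology of $X\times X^*$, the function $g$ is convex and lower semicontinuous for that topology; in particular $g$ is closed, and $g\geq\pi$ by hypothesis. Monotonicity of $T$ is then immediate and uses nothing else: for $(x,x^*),(y,y^*)\in T$ and $t\in(0,1)$, applying convexity of $g$ at $(1-t)(x,x^*)+t(y,y^*)$, using $g\geq\pi$ there and $g=\pi$ at the endpoints, and expanding, one gets $t(1-t)\inner{x-y}{x^*-y^*}\geq0$, hence $\inner{x-y}{x^*-y^*}\geq0$.

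The subspace hypothesis enters only in the proof that $T$ is \emph{maximal}. Let $(\bar x,\bar x^*)$ be monotonically related to $T$; the goal is $(\bar x,\bar x^*)\in T$, equivalently (given $g\geq\pi$) $g(\bar x,\bar x^*)\leq\inner{\bar x}{\bar x^*}$. I would first reduce to $(\bar x,\bar x^*)=(0,0)$ by the affine substitution $\tilde h(x,x^*):=h(x+\bar x,x^*+\bar x^*)-\inner{x}{\bar x^*}-\inner{\bar x}{x^*}-\inner{\bar x}{\bar x^*}$: one checks that $\tilde h-\pi$ equals $h-\pi$ composed with the translation by $(\bar x,\bar x^*)$, and likewise $\mathcal J\tilde h-\pi$ equals $\mathcal Jh-\pi$ composed with that translation, so $\tilde h$ is again closed convex with $\tilde h\geq\pi$ and $\mathcal J\tilde h\geq\pi$, the set $\{\mathcal J\tilde h=\pi\}$ is the translate of $T$ by $-(\bar x,\bar x^*)$, and $P_1D(\tilde h)=P_1D(h)-\bar x$, so the subspace hypothesis persists (replace $x_0$ by $x_0-\bar x$). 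Renaming $\tilde h$ as $h$, the hypothesis becomes $\inner{y}{y^*}\geq0$ for every $(y,y^*)\in T$, and, since $g(0,0)=h^*(0,0)=-\inf_{X\times X^*}h$, the conclusion $(0,0)\in T$ is precisely the inequality $\inf_{X\times X^*}h\geq0$.

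This lower estimate is the heart of the proof and the step I expect to be the main obstacle. It is a Br{\o}ndsted--Rockafellar / Fenchel-duality argument of the type underlying Fitzpatrick's maximality theorem: if $\inf h<0$, say $h(a_0,a_0^*)<0$, one wants to exhibit $(y,y^*)\in T$ with $\inner{y}{y^*}<0$, contradicting the (translated) hypothesis; this amounts to producing an affine function of the form $(v,v^*)\mapsto\inner{y}{v^*}+\inner{v}{y^*}-\inner{y}{y^*}$ that minorizes $h$ and takes a positive value at the origin. In a \emph{reflexive} space this can be done by a routine duality argument; in a \emph{non-reflexive} space the natural dual variable lives in $X^{**}\times X^*$, and both the absence of a duality gap and the attainment of the dual value \emph{by an element of $X\times X^*$} may fail. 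The hypothesis that $\bigcup_{\lambda>0}\lambda(P_1D(h)-x_0)$ is a \emph{closed subspace} is exactly the Rockafellar / Attouch--Br\'ezis-type constraint qualification that restores both, and with it the construction goes through, the strict inequality $h(a_0,a_0^*)<0$ together with $\mathcal Jh\geq\pi$ then forcing the desired $(y,y^*)\in T$ with $\inner{y}{y^*}<0$. Hence $\inf h\geq0$, so $(\bar x,\bar x^*)\in T$ and $T$ is maximal monotone.

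It remains to identify the Fitzpatrick family. Once $T$ is maximal monotone, $g=\mathcal Jh$ is closed, convex, $\geq\pi$, and equals $\pi$ on $T$ by the very definition of $T$, so $g\in\mathcal F_T$. For the last assertion, suppose in addition that $h$ is lower semicontinuous for the strong$\times$weak-$*$ topology. The topological dual of $X\times X^*$ in that topology is $X^*\times X$, so the continuous affine functions on $X\times X^*$ are precisely those of the form $(x,x^*)\mapsto\inner{x}{z^*}+\inner{z}{x^*}+c$ with $z\in X$, $z^*\in X^*$ --- equivalently $\Phi(\cdot,(z,z^*))+c$. Since a convex function that is lower semicontinuous for a locally convex topology is the supremum of its continuous affine minorants in that topology, $h$ equals its $\Phi$-biconjugate: $h=\mathcal J(\mathcal Jh)=\mathcal Jg$. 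As $\mathcal F_T$ is invariant under $\mathcal J$ and $g\in\mathcal F_T$, we conclude $h=\mathcal Jg\in\mathcal F_T$, completing the proof.
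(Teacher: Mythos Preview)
The paper does not give its own proof of this theorem: it is quoted from \cite[Theorem~3.1, Corollary~3.2]{MR2675662} as restated in \cite[Theorem~2.1]{QSUM}. So there is no in-paper argument to compare your proposal against, and what follows is an assessment of the proposal on its own terms.

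Your outline is structurally sound. The translation reduction to $(\bar x,\bar x^*)=(0,0)$ is correct and the translated hypotheses are as you state; the identification of the target inequality as $\inf h\geq 0$ (equivalently $h^*(0,0)\leq 0$) is right; the observation that $\mathcal Jh\in\mathcal F_T$ is immediate once maximality is known; and your derivation of $h=\mathcal J^2h$ from strong$\times$weak-$*$ lower semicontinuity (the topological dual being $X^*\times X$), together with the $\mathcal J$-invariance of $\mathcal F_T$ recorded in the paper, correctly yields $h\in\mathcal F_T$.

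The genuine gap is precisely the step you flag as the main obstacle. You describe correctly \emph{what} must be produced --- a point $(y,y^*)\in X\times X^*$ with $\mathcal Jh(y,y^*)=\inner{y}{y^*}<0$, equivalently a $\Phi$-affine minorant of $h$ that is strictly positive at the origin --- and you correctly identify the closed-subspace hypothesis as an Attouch--Br\'ezis-type qualification. But ``with it the construction goes through'' is not a proof: this is exactly the substantive content of the cited result. One has to show, from $h(a_0,a_0^*)<0$ and $\mathcal Jh\geq\pi$, both that a suitable Fenchel-type dual value is attained and that the attaining functional lives in $X^*\times X$ rather than merely in $X^*\times X^{**}$; the closed-subspace condition on $P_1D(h)$ is what forces the second coordinate of the minorant back into $X$. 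Until that argument is written out, the proposal is a correct roadmap to the difficulty rather than its resolution.
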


\section{Extended real valued
functions  in $X\times X\sp*$ and their conjugates}
\label{sec:dom}

In this section we study extended real valued functions in $X\times
X^*$.
We start with a general and simple result.

\begin{proposition}
  \label{pr:dr}
  If $h:X\times X^*\to\BR$ is a proper closed convex function then $
  D(\mathcal{J}\delta_{D(h)})\subseteq 0^+D(\mathcal{J}h) $.
\end{proposition}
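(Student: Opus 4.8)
The plan is to push everything through the $\Phi$-conjugation picture recalled in Section~\ref{sec:nbr}: for $g:X\times X^*\to\BR$ one has $\mathcal{J}g(z)=g^{\Phi}(z)=\sup_{z'\in X\times X^*}\Phi(z,z')-g(z')$, and the coupling $\Phi$ is linear in its first argument. Since $h$ is proper, $D(h)\neq\emptyset$, and a one-line computation (the terms with $z'\notin D(h)$ contribute $-\infty$) shows that $\mathcal{J}\delta_{D(h)}$ is the $\Phi$-support function of $D(h)$:
\[
\mathcal{J}\delta_{D(h)}(u)=\sup_{z'\in D(h)}\Phi(u,z'),\qquad u\in X\times X^*.
\]
So I would begin by fixing $u\in D(\mathcal{J}\delta_{D(h)})$ and setting $M:=\mathcal{J}\delta_{D(h)}(u)<\infty$; this means precisely that $\Phi(u,z')\le M$ for every $z'\in D(h)$.

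Next, fix an arbitrary $z\in D(\mathcal{J}h)$ and $\lambda\ge 0$; the aim is to show $z+\lambda u\in D(\mathcal{J}h)$. Using the linearity $\Phi(z+\lambda u,z')=\Phi(z,z')+\lambda\,\Phi(u,z')$ and discarding the points $z'\notin D(h)$,
\begin{align*}
\mathcal{J}h(z+\lambda u)
&=\sup_{z'\in D(h)}\bigl(\Phi(z,z')-h(z')\bigr)+\lambda\,\Phi(u,z')\\
&\le\sup_{z'\in D(h)}\bigl(\Phi(z,z')-h(z')\bigr)+\lambda M
=\mathcal{J}h(z)+\lambda M<\infty,
\end{align*}
where the inequality uses $\lambda\ge 0$ together with $\Phi(u,z')\le M$ on $D(h)$, and the final finiteness uses $z\in D(\mathcal{J}h)$. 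Hence $z+\lambda u\in D(\mathcal{J}h)$.

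Since $z\in D(\mathcal{J}h)$ and $\lambda\ge 0$ were arbitrary, this says exactly that $u$ is a direction of recession of $D(\mathcal{J}h)$, i.e. $u\in 0^+D(\mathcal{J}h)$, which is the claimed inclusion. I do not expect a genuine obstacle: the only points needing care are the $+\infty$ bookkeeping when splitting the supremum and the role of the sign condition $\lambda\ge 0$ in passing from $\lambda\,\Phi(u,z')$ to $\lambda M$. It is worth noting that closedness and convexity of $h$ are not actually used in the argument — they serve only to guarantee that $\mathcal{J}h=h^*\circ(\mathrm{swap})$ is proper, so that $D(\mathcal{J}h)\neq\emptyset$ and the statement is not vacuous.
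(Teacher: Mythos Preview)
Your proof is correct and is essentially the paper's argument unpacked: the paper simply invokes Proposition~\ref{pr:e} with $f=h$, which is exactly your inequality $\mathcal{J}h(z+\lambda u)\le \mathcal{J}h(z)+\lambda\,\mathcal{J}\delta_{D(h)}(u)$ written in ordinary conjugate notation, and the appendix proof of Proposition~\ref{pr:e} is precisely the sup-splitting you carry out. Your side remark that closedness and convexity of $h$ play no role in the inequality itself is also accurate, and applies equally to the paper's Proposition~\ref{pr:e}.
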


\begin{proof}
  Use definition~\eqref{eq:j} and Proposition~\ref{pr:e} with $f=h$ 
\end{proof}

Now we study those extended real valued functions in
$X\times X^*$ which satisfy one of the inequalities in~\eqref{eq:icb}.
The two next propositions are the main technical tools in this work.

\begin{lemma}
  \label{lm:dhs}
  If $h:X\times X^*\to\BR$ and $\mathcal{J}h\geq\pi$
  (where 
  $\mathcal{J}$, $\pi$ are as in \eqref{eq:j}, \eqref{eq:pi}), then
 \begin{align*}
  P_2(D\left(\mathcal{J}\,h\right))&=
  P_1\left( D (h^*) \cap (X^*\times X\right))
  \subseteq\cl_{w*} \conv P_2(D(h)),\\
  P_1(D\left(\mathcal{J}\,h\right))&=
  P_2\left(D(h^*)\cap (X^*\times X\right))
  \subseteq\cl \conv P_1(D(h)).
 \end{align*}
\end{lemma}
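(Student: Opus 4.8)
The two displayed equalities are purely formal: unwinding $\mathcal{J}h(x,x^*)=h^*(x^*,x)$ shows that a pair $(x,x^*)\in X\times X^*$ lies in $D(\mathcal{J}h)$ exactly when $(x^*,x)\in D(h^*)\cap(X^*\times X)$ (using the canonical embedding $X\hookrightarrow X^{**}$), and reading off the first, resp.\ second, coordinate gives the two identities. So the content is in the two inclusions, which I would prove in parallel. If $D(\mathcal{J}h)=\emptyset$ there is nothing to prove; otherwise fix $(x_0,x_0^*)\in D(\mathcal{J}h)$. Since $\mathcal{J}h\geq\pi$, the number $c:=\mathcal{J}h(x_0,x_0^*)=h^*(x_0^*,x_0)$ satisfies $\inner{x_0}{x_0^*}\leq c<\infty$, hence $c\in\R$ and in particular $D(h)\neq\emptyset$ (and $h>-\infty$).

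For the first inclusion I want $x_0^*\in\cl_{w*}\conv P_2(D(h))$. The plan is to perturb the second argument of $h^*$ by a vector of $X$ and let its magnitude tend to $\infty$. Fix $y\in X$ and $t>0$. On one side, $\mathcal{J}h\geq\pi$ gives
\[
h^*(x_0^*,x_0+ty)=\mathcal{J}h(x_0+ty,x_0^*)\geq\inner{x_0+ty}{x_0^*}=\inner{x_0}{x_0^*}+t\inner{y}{x_0^*}.
\]
On the other side, expanding the conjugate and abbreviating $g(u,u^*):=\inner{u}{x_0^*}+\inner{u^*}{x_0}-h(u,u^*)$, we get $h^*(x_0^*,x_0+ty)=\sup_{(u,u^*)}\bigl(g(u,u^*)+t\inner{y}{u^*}\bigr)$; since $g\leq h^*(x_0^*,x_0)=c$ everywhere and $g\equiv-\infty$ off $D(h)$, the supremum is at most $c+t\,\delta_{P_2(D(h))}^*(y)$ (the support function of $P_2(D(h))$). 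Combining the two sides, dividing by $t$, and letting $t\to\infty$ yields $\inner{y}{x_0^*}\leq\delta_{P_2(D(h))}^*(y)$. Since $y\in X$ was arbitrary, $x_0^*$ belongs to every weak-$*$ closed half-space with normal in the predual $X$ that contains $P_2(D(h))$, hence to $\cl_{w*}\conv P_2(D(h))$ by Hahn--Banach separation in the weak-$*$ topology of $X^*$.

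The second inclusion is the mirror image, perturbing instead the first argument of $h^*$: for $y^*\in X^*$ and $t>0$, the lower bound $\mathcal{J}h(x_0,x_0^*+ty^*)\geq\pi(x_0,x_0^*+ty^*)$ and the conjugate expansion combine to give
\[
\inner{x_0}{x_0^*}+t\inner{x_0}{y^*}\leq h^*(x_0^*+ty^*,x_0)\leq c+t\,\delta_{P_1(D(h))}^*(y^*),
\]
so letting $t\to\infty$ gives $\inner{x_0}{y^*}\leq\delta_{P_1(D(h))}^*(y^*)$ for every $y^*\in X^*$, i.e.\ $x_0\in\cl\conv P_1(D(h))$ by ordinary Hahn--Banach separation in $X$ (for a convex set the strong and weak closures coincide, so this is the norm closure).

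I do not expect a genuine obstacle here; the only delicate points are bookkeeping: tracking which argument of $h^*$ lives in $X$ and which only in $X^{**}$ so that the perturbed point $x_0+ty$ (resp.\ $x_0^*+ty^*$) sits in the right space, and using the appropriate separation theorem when converting the family of linear inequalities into membership in the (weak-$*$, resp.\ norm) closed convex hull of the corresponding projected domain. Note also that one must work with the original $D(h)$ directly rather than routing through $\cl\conv h$, since passing to a convex minorant with the same conjugate would only yield the support function of the larger set $D(\cl\conv h)$ and hence a weaker bound.
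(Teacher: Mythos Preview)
Your proof is correct and follows essentially the same approach as the paper's: both perturb the second (resp.\ first) argument of $h^*$ along a ray in $X$ (resp.\ $X^*$), bound $h^*$ from below via $\mathcal{J}h\geq\pi$ and from above via the restricted support coming from $D(h)$, then divide by the perturbation parameter and let it tend to infinity. The only cosmetic difference is that the paper argues by contradiction with a single separating functional obtained from Hahn--Banach, whereas you derive the full family of linear inequalities $\inner{y}{x_0^*}\leq\delta_{P_2(D(h))}^*(y)$ directly and then invoke separation; these are two phrasings of the same computation.
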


\begin{proof}
  If $h(x,x^*)=-\infty$ for some $(x,x^*)$ the proposition holds
  trivialy.  Assume that $h$ is proper. The two equalities follows
  triviality from definition~\eqref{eq:j}.  To prove the first
  inclusion, suppose that
  \begin{equation}
    \label{eq:abs.1}
    u^*\notin\cl_{w*}\conv P_2(D(h)),
    \quad 
    u^*\in P_1(D(h^*)\cap (X^*\times X)).
  \end{equation}
  From the first above relation and the geometric form of Hahn-Banach
  Lemma in $X^*$ endowed with the weak-$*$ topology it follows that
  there exist $\hat x\in X$ and $\varepsilon>0$ such that
  \[
  \inner{\hat x}{u^*}\geq
   \inner{\hat x}{y^*}+\varepsilon,
  \quad \forall y^*\in P_2(D(h^*)),
  \] 
  while the second relation in \eqref{eq:abs.1}  means that
  $\infty>h^*(u^*,u)\geq \inner{u^*}{u}$ for some $u\in X$. By the
  definition of the conjugation
  \[
   h^*(u^*,u)\geq\inner{y}{u^*}+\inner{u}{y^*}-h(y,y^*),
   \qquad   \forall (y,y^*)\in X\times X^*.
  \]
  Combining the two above inequalities we conclude that, for any
  $\lambda\geq 0$, 
  \[
  h^*(u^*,u)+\lambda[\inner{\hat x}{u^*}-\varepsilon]
  \geq \inner{y}{u^*}+\inner{u+\lambda \hat x}{y^*}-h(y,y^*).
  \]
  Therefore, taking the $\sup$ for $(y,y^*)\in X\times X^*$ in the right hand
  side of the above inequality we have
  \[
  h^*(u^*,u)+\lambda[\inner{\hat x}{u^*}-\varepsilon] 
  \geq h^*(u^*,u+\lambda\hat x)
  \geq \inner{u+\lambda\hat x}{u^*}.
  \]
  Dividing these  inequalities by $\lambda$,
  noting that $h^*(u^*,u)\in\R$, and taking the limit
  $\lambda\to\infty$ we obtain $\inner{\hat x}{u^*}-\varepsilon
  \geq \inner{\hat x}{u^*}
  $
  which is absurd. Therefore, there is no $u^*$ as in \eqref{eq:abs.1}
  and the first inclusion on the lemma holds.

  To prove the second inclusion, suppose that
  \begin{equation}
    \label{eq:abs.2}
    u\notin \cl\,\conv\,P_1(D(h)),
    \qquad
    u\in P_2(D(h^*)\cap (X^*\times X)).
  \end{equation}
  From the firs above relation and the geometric form of Hahn-Banach
  Lemma in $X$ endowed with the strong topology it follows that 
  there exist $\hat x^*\in X^*$ and $\varepsilon>0$ such that
  \[
  \inner{u}{\hat x^*}\geq\inner{y}{\hat x^*}+\varepsilon,\qquad \forall
  y\in P_1(D(h)),
  \]
  while the second relation in \eqref{eq:abs.2} means that
  there exists $u^*\in X^*$ such that $\infty>h^*(u^*,u)\geq\inner{u}{u^*}$.
  Combining this inequality with the definition of the conjugation
   we conclude that for any $\lambda\geq 0$
  \[
  h^*(u^*,u)+\lambda[\inner{u}{\hat x^*}-\varepsilon]\geq
  \inner{y}{u^*+\lambda \hat x^*}+\inner{u}{y^*}-h(y,y^*).
  \]
  Therefore, taking the $\sup$ in $y\in X, y^*\in X^*$ at the right hand
  side of the above inequality we have
  \[
   h^*(u^*,u)+\lambda[\inner{u}{\hat x^*}-\varepsilon]\geq
   h^*(u^*+\lambda\hat x^*,u)\geq \inner{u}{u^*+\lambda\hat x^*}.
   \]
  Dividing this inequality by $\lambda$,
  noting that $h^*(u^*,u)\in\R$, and taking the limit
  $\lambda\to\infty$ we obtain $\inner{u}{\hat x^*}-\varepsilon
  \geq \inner{u}{\hat x^*}
  $
  which is absurd. Therefore, there is no $u^*$ as in \eqref{eq:abs.2}
  and the second inclusion on the lemma holds.
\end{proof}

The next result is proved using the same reasoning as in Lemma~\ref{lm:dhs}.

\begin{lemma}
  \label{lm:dh}
  If $h:X\times X^*\to\BR$ is convex and $h\geq\pi$ then
  \[
  P_2(D(h))\subseteq\cl_{w*} P_1(D(h^*)), \qquad
  P_1(D(h))\subseteq\cl_{w*} P_2(D(h^*)).
  \]
\end{lemma}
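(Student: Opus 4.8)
The plan is to reproduce the contradiction scheme of Lemma~\ref{lm:dhs}, but to run it — through Proposition~\ref{pr:e} — on the conjugate $h^*$ rather than on $h$ itself; this detour is forced because the separating functionals the argument produces live in $X$ and in $X^*$, whereas a literal imitation of Lemma~\ref{lm:dhs} would need them in the two slots of $(X\times X^*)^*=X^*\times X^{**}$. First I would discard the trivial case $h\equiv\infty$ and then reduce to $h$ closed: since $\pi$ is continuous and $\pi\le h$, the closure $\cl h$ is a proper closed convex function with $D(\cl h)\supseteq D(h)$ and $(\cl h)^*=h^*$, so (using $D(h)\subseteq D(\cl h)$) it suffices to prove both inclusions with $\cl h$ in place of $h$; thus I assume $h$ proper, closed and convex. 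Then $h^*$ is a proper closed convex function on $X^*\times X^{**}=X^*\times(X^*)^*$, its conjugate $(h^*)^*$ lives on $(X^*\times X^{**})^*=X^{**}\times X^{***}$, and by Fenchel--Moreau $(h^*)^*$ coincides with $h$ on $X\times X^*$, the latter embedded canonically in $X^{**}\times X^{***}$. Fix $(x_0,x_0^*)\in D(h)$, so that $\inner{x_0}{x_0^*}\le h(x_0,x_0^*)<\infty$.

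For the first inclusion, $P_2(D(h))\subseteq\cl_{w*}P_1(D(h^*))$, I would argue by contradiction. If $x_0^*\notin\cl_{w*}P_1(D(h^*))$, then, $P_1(D(h^*))$ being convex, Hahn--Banach separation in $(X^*,\sigma(X^*,X))$ — whose continuous dual is $X$ — furnishes $\hat x\in X$ and $\varepsilon>0$ with $\inner{\hat x}{z^*}\le\inner{\hat x}{x_0^*}-\varepsilon$ for all $z^*\in P_1(D(h^*))$. Applying Proposition~\ref{pr:e} to $h^*$ at the point $(x_0,x_0^*)\in X^{**}\times X^{***}$ in the direction $(\hat x,0)\in X^{**}\times X^{***}$ gives, for every $\lambda\ge0$,
\[
(h^*)^*(x_0,x_0^*)+\lambda\,\delta_{D(h^*)}^*(\hat x,0)\;\ge\;(h^*)^*(x_0+\lambda\hat x,x_0^*).
\]
Now $\delta_{D(h^*)}^*(\hat x,0)=\sup_{z^*\in P_1(D(h^*))}\inner{\hat x}{z^*}\le\inner{\hat x}{x_0^*}-\varepsilon$, the value $(h^*)^*(x_0,x_0^*)=h(x_0,x_0^*)$ is a finite number, and $(h^*)^*(x_0+\lambda\hat x,x_0^*)=h(x_0+\lambda\hat x,x_0^*)\ge\inner{x_0+\lambda\hat x}{x_0^*}=\inner{x_0}{x_0^*}+\lambda\inner{\hat x}{x_0^*}$. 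Inserting these and cancelling the common term $\lambda\inner{\hat x}{x_0^*}$ leaves $h(x_0,x_0^*)-\inner{x_0}{x_0^*}\ge\lambda\varepsilon$ for all $\lambda\ge0$, which is absurd for large $\lambda$; hence $x_0^*\in\cl_{w*}P_1(D(h^*))$.

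For the second inclusion, $P_1(D(h))\subseteq\cl_{w*}P_2(D(h^*))$, I would run the mirror-image argument: if $x_0\notin\cl_{w*}P_2(D(h^*))$, separate $x_0$ from this weak-$*$ closed convex subset of $X^{**}$ in $(X^{**},\sigma(X^{**},X^*))$ — continuous dual $X^*$ — to get $\hat x^*\in X^*$ and $\varepsilon>0$ with $\inner{z^{**}}{\hat x^*}\le\inner{x_0}{\hat x^*}-\varepsilon$ for all $z^{**}\in P_2(D(h^*))$; then apply Proposition~\ref{pr:e} to $h^*$ at $(x_0,x_0^*)$ in the direction $(0,\hat x^*)$ and combine $\delta_{D(h^*)}^*(0,\hat x^*)=\sup_{z^{**}\in P_2(D(h^*))}\inner{z^{**}}{\hat x^*}\le\inner{x_0}{\hat x^*}-\varepsilon$ with $(h^*)^*(x_0,x_0^*+\lambda\hat x^*)=h(x_0,x_0^*+\lambda\hat x^*)\ge\inner{x_0}{x_0^*}+\lambda\inner{x_0}{\hat x^*}$ to reach the identical contradiction. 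I expect the only real friction to be this relocation of the estimate onto $h^*$ — recording that $h^*$ is proper, identifying $(h^*)^*$ with $h$ on $X\times X^*$ via Fenchel--Moreau, and keeping the embeddings $X\hookrightarrow X^{**}$, $X^*\hookrightarrow X^{***}$ straight so that $(\hat x,0)$ and $(0,\hat x^*)$ genuinely lie in $(X^*\times X^{**})^*$; the growth comparison itself, a linear rate imposed by $h\ge\pi$ against a strictly smaller rate produced by the separation, is exactly as in Lemma~\ref{lm:dhs}.
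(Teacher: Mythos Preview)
Your proof is correct and follows essentially the same route as the paper's: reduce to $h$ proper closed convex, use Fenchel--Moreau to identify $(h^*)^*$ with $h$ on $X\times X^*$, separate by Hahn--Banach in the appropriate weak-$*$ topology, and obtain a growth contradiction against $h\ge\pi$. The only difference is cosmetic: you package the key inequality as an instance of Proposition~\ref{pr:e} applied to $f=h^*$, whereas the paper writes out that same computation directly from the definition of the biconjugate; the content is identical.
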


\begin{proof}
  If $h\equiv\infty$ the lemma holds trivially.
  Since $\cl h$ is convex, majorizes the duality product, 
  $h^*=(\cl\, h)^*$ and $D(h)\subset D(\cl\, h)$,
  we may assume that $h$ is a proper closed convex function and,
  under this assumption, using Moreau Theorem we have
  \[
   h(x,x^*)=h^{**}(x,x^*).
  \]

  Suppose that
  \[
  u^*\notin\cl_{w*}\conv P_1(D(h^*)),
  \quad u^*\in
  P_2(D(h)).
  \]
  This means that there exists $\hat x\in X$ and $\varepsilon>0$
  such that
  \[
  \inner{\hat x}{u^*}\geq
   \inner{\hat x}{y^*}+\varepsilon,
  \quad \forall y^*\in P_1(D(h^*)),
  \] 
  and that $\infty>h(u,u^*)\geq \inner{u}{u^*}$ for some
  $u\in X$. Hence, by the  definition of the conjugation 
  \[
   h(u,u^*)=h^{**}(u,u^*)\geq\inner{y^{**}}{u^*}+\inner{u}{y^*}-h^*(y^*,y^{**})
   \qquad   \forall y^*\in X^*, y^{**}\in X^{**}.
  \]
  Combining the two above equations we conclude that, for any
  $\lambda\geq 0$, 
  \[
  h(u,u^*)+\lambda[\inner{\hat x}{u^*}-\varepsilon]
  \geq \inner{y^{**}}{u^*}+\inner{u+\lambda \hat x}{y^*}-h^*(y^*,y^{**}).
  \]
  Therefore, taking the $\sup$ in $y^*\in X^*, y^{**}\in X^{**}$ at
  the right hand side of the above inequality we have
  \[
  h(u,u^*)+\lambda[\inner{\hat x}{u^*}-\varepsilon] 
  \geq h^{**}(u+\lambda\hat x,u^*)=h(u+\lambda\hat x,u^*)
  \geq \inner{u+\lambda\hat x}{u^*}.
  \]
  Dividing this inequality by $\lambda$ and taking the limit
  $\lambda\to\infty$ we obtain $\inner{\hat x}{u^*}-\varepsilon
  \geq \inner{\hat x}{u^*}
  $
  which is absurd. 

  Suppose that
  \[
  u\notin\cl_{w*}\conv P_2(D(h^*)),\quad u\in
  P_1(D(h)).
  \]
  This means that there exists $\hat x^*\in X^*$ and $\varepsilon>0$
  such that
  \[
  \inner{u}{\hat x^*}\geq
   \inner{y^{**}}{\hat x^*}+\varepsilon,
  \quad \forall y^{**}\in P_2(D(h^*)),
  \] 
  and that $\infty>h(u,u^*)\geq \inner{u}{u^*}$ for some
  $u^*\in X^*$. Hence, by the  definition of the conjugation 
  \[
   h(u,u^*)=h^{**}(u,u^*)\geq\inner{y^{**}}{u^*}+\inner{u}{y^*}-h^*(y^*,y^{**})
   \qquad   \forall y^*\in X^*, y^{**}\in X^{**}.
  \]
  Combining the two above equations we conclude that, for any
  $\lambda\geq 0$, 
  \[
  h(u,u^*)+\lambda[\inner{u}{\hat x^*}-\varepsilon] \geq
  \inner{y^{**}}{u^*+\lambda \hat x^*}+\inner{u}{y^*}-h^*(y^*,y^{**}).
  \]
  Therefore, taking the $\sup$ in $y^*\in X^*, y^{**}\in X^{**}$ at
  the right hand side of the above inequality we have
  \[
  h(u,u^*)+\lambda[\inner{u}{\hat x^*}-\varepsilon]
  \geq h^{**}(u,u^*+\lambda\hat x^*)=h(u,u^*+\lambda\hat x^*)
  \geq \inner{u}{u^*+\lambda\hat x^*}.
  \]
  Dividing this inequality by $\lambda$ and taking the limit
  $\lambda\to\infty$ we obtain $\inner{u}{\hat x^*}-\varepsilon \geq
  \inner{u}{\hat x^*}$, which is absurd.
\end{proof}  

Now we analyze those functions $h$ for which $P_1D(h)$ or $P_2D(h)$
(or $P_1D(h^*)$, $P_2D(h^*)$) is bounded.

\begin{proposition}
  \label{pr:hb}
  Let  $h:X\times X^*\to\BR$  and $0\leq L<\infty$.
  \begin{enumerate}
  \item 
    If   $P_2D(h)\subseteq B_{X^*}[L]$ then
    \[
    h^*(x^*,x^{**})\leq h^*(x^*,z^{**})+L\norm{x^{**}-z^{**}},\qquad
    \forall x^*\in X^*,x^{**},z^{**}\in X^{**}
    \]
    and  $D(h^*)=(P_1D(h^*))\times X^{**}$;
  \item    
    If $P_1D(h)\subseteq B_{X}[L]$ then
    \[
    h^*(x^*,x^{**})\leq h^*(z^*,x^{**})+L\norm{x^*-z^*},\qquad
    \forall x^*,z^*\in X^*,x^{**}\in X^{**}
    \]
    and  $D(h^*)=X^*\times (P_2D(h^*))$.
  \end{enumerate}
\end{proposition}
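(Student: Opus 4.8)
The plan is to read off both statements directly from the definition of the conjugate $h^*\colon X^*\times X^{**}\to\BR$, namely
\[
h^*(x^*,x^{**})=\sup_{(y,y^*)\in X\times X^*}\inner{y}{x^*}+\inner{y^*}{x^{**}}-h(y,y^*),
\]
where the supremum is effectively over $(y,y^*)\in D(h)$ and where I use the canonical identification $(X\times X^*)^*=X^*\times X^{**}$ with pairing $\inner{(y,y^*)}{(x^*,x^{**})}=\inner{y}{x^*}+\inner{y^*}{x^{**}}$. First I would dispose of the degenerate cases. If $h(y_0,y_0^*)=-\infty$ for some point, then $h^*\equiv+\infty$ and both assertions are vacuously true; if $h\equiv+\infty$ then $D(h)=\emptyset$, so $h^*\equiv-\infty$, $D(h^*)$ is the whole space, and $P_1D(h^*)=X^*$ (resp. $P_2D(h^*)=X^{**}$), so again everything holds trivially. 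Hence I may assume $h$ is proper, and then $h^*>-\infty$ everywhere, so all the sums below make sense in $\BR$.

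For item~1, the hypothesis $P_2D(h)\subseteq B_{X^*}[L]$ says precisely that $\norm{y^*}\leq L$ for every $(y,y^*)\in D(h)$. For such a pair and any $x^{**},z^{**}\in X^{**}$ I would write $\inner{y^*}{x^{**}}=\inner{y^*}{z^{**}}+\inner{y^*}{x^{**}-z^{**}}$ and bound the last term by $\norm{y^*}\,\norm{x^{**}-z^{**}}\leq L\norm{x^{**}-z^{**}}$. Substituting this into the bracket $\inner{y}{x^*}+\inner{y^*}{x^{**}}-h(y,y^*)$ and taking the supremum over $(y,y^*)$ (the additive constant $L\norm{x^{**}-z^{**}}$ being finite, it pulls out of the sup) yields
\[
h^*(x^*,x^{**})\leq h^*(x^*,z^{**})+L\norm{x^{**}-z^{**}}.
\]
The equality $D(h^*)=(P_1D(h^*))\times X^{**}$ is then immediate: the inclusion $\subseteq$ is trivial, and conversely, if $x^*\in P_1D(h^*)$ pick $z^{**}$ with $h^*(x^*,z^{**})<\infty$; the estimate just proved gives $h^*(x^*,x^{**})<\infty$ for every $x^{**}\in X^{**}$, so $(x^*,x^{**})\in D(h^*)$.

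Item~2 is the mirror image: $P_1D(h)\subseteq B_X[L]$ gives $\norm{y}\leq L$ on $D(h)$, and I split $\inner{y}{x^*}=\inner{y}{z^*}+\inner{y}{x^*-z^*}$, bounding the last term by $L\norm{x^*-z^*}$; taking the supremum produces $h^*(x^*,x^{**})\leq h^*(z^*,x^{**})+L\norm{x^*-z^*}$, and exactly the same argument as above gives $D(h^*)=X^*\times(P_2D(h^*))$. I do not expect a genuine obstacle here; the only points that need a little care are the bookkeeping with the identification of $(X\times X^*)^*$ and its duality pairing, and keeping the $\BR$-valued arithmetic legitimate — which is why I handle the improper cases at the very start so that, in the main computation, the right-hand side $h^*(x^*,z^{**})$ (resp. $h^*(z^*,x^{**})$) is never $-\infty$, and if it is $+\infty$ the inequality holds trivially.
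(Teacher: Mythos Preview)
Your proof is correct and follows essentially the same route as the paper: both arguments split $\inner{x^{**}}{y^*}=\inner{z^{**}}{y^*}+\inner{x^{**}-z^{**}}{y^*}$, bound the last term by $L\norm{x^{**}-z^{**}}$ using the hypothesis on $P_2D(h)$, and take the supremum. You are simply a bit more explicit than the paper about the degenerate cases ($h$ attaining $-\infty$ or $h\equiv\infty$) and about why the Lipschitz estimate yields the product form of $D(h^*)$.
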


\begin{proof}
  Using the definition of the conjugate and the assumption on the domain
  of $h$ we conclude that for any $x^*\in X^{**}$ and
  $x^{**},z^{**}\in X^{**}$
  \begin{align*}
    h^*(x^*,x^{**})
    &=\sup_{(y,y^*)\in X\times X^*}
     \inner{y}{x^*}+\inner{x^{**}}{y^*}-h(y,y^*)\\
    &=\sup_{(y,y^*)\in X\times X^*,\,\norm{y^*}\leq L}
     \inner{y}{x^*}+\inner{x^{**}}{y^*}-h(y,y^*)\\
    &=\sup_{(y,y^*)\in X\times X^*, \norm{y^*}\leq L}
     \inner{y}{x^*}+\inner{z^{**}}{y^*}-h(y,y^*)+\inner{x^{**}-z^{**}}{y^*}\\
    &\leq\sup_{(y,y^*)\in X\times X^*, \norm{y^*}\leq L}
     \inner{y}{x^*}+\inner{z^{**}}{y^*}-h(y,y^*)+L\norm{x^{**}-z^{**}}
    =h^*(x^*,z^{**})+L\norm{x^{**}-z^{**}}
  \end{align*}
  which proves the first result in item 1). The second result in
  item 1 follows triviality from the first result.

  Item 2 is proved by the same reasoning.
\end{proof}

\begin{proposition}
  \label{pr:hsb}
  Suppose that $h:X\times X^*\to\BR$ is a proper closed convex function
  and $0\leq L\leq\infty$.
  \begin{enumerate}
  \item   If
  $P_1D(h^*)\subseteq B_{X^*}[L]$ then
  for any $x^*\in X^*$ and $x,z\in X$
  \[
  h(x,x^*)\leq h(z,x^*) +L\norm{x-z},\qquad\forall x^*\in X^*, x,z\in X
  \]
  and $D(h)=X\times (P_2D(h))$;
  \item   if
  $P_2D(h^*)\subseteq B_{X^{**}}[L]$ then
  for any $x^*\in X^*$ and $x,z\in X$
  \[
  h(x,x^*)\leq h(x,z^*) +L\norm{x^*-z^*},\qquad\forall x^*,z^*\in X^*, x\in X
  \]
  and $D(h)=(P_1D(h))\times X^*$.
  \end{enumerate}
\end{proposition}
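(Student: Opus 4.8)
The plan is to deduce Proposition~\ref{pr:hsb} from Proposition~\ref{pr:hb} applied to the conjugate $h^*$ in place of $h$. Set $g:=h^*$ and regard it as a $\BR$-valued function on the Cartesian product $X^*\times(X^*)^*=X^*\times X^{**}$ of the Banach space $X^*$ with its dual. Proposition~\ref{pr:hb} requires nothing of $g$ beyond being such a function, so it applies verbatim, and the conjugate it manipulates, $g^*=(h^*)^*$, is a function on $X^{**}\times X^{***}$. The connection with $h$ is Moreau's theorem, exactly as used in the proof of Lemma~\ref{lm:dh}: since $h$ is proper, closed and convex, $h=h^{**}$, which here says $g^*(x,x^*)=h(x,x^*)$ for every $(x,x^*)$ in the canonically embedded copy of $X\times X^*$ inside $X^{**}\times X^{***}$; in particular, writing $\iota$ for that embedding, $\iota\bigl(D(h)\bigr)=\iota\bigl(X\times X^*\bigr)\cap D(g^*)$.

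Given this, the two items are mere translations. For item~1, the hypothesis $P_1D(h^*)\subseteq B_{X^*}[L]$ says exactly that $P_1D(g)\subseteq B_{X^*}[L]$, which is the hypothesis of item~2 of Proposition~\ref{pr:hb} for $g$ (with the roles of $X$, $X^*$, $X^{**}$ there played by $X^*$, $X^{**}$, $X^{***}$). That proposition then makes $g^*$ $L$-Lipschitz in its first variable and gives $D(g^*)=X^{**}\times P_2D(g^*)$. Restricting the Lipschitz estimate to the embedded copy of $X\times X^*$ and using $g^*=h$ there yields $h(x,x^*)\le h(z,x^*)+L\norm{x-z}$; intersecting $D(g^*)=X^{**}\times P_2D(g^*)$ with $\iota(X\times X^*)$ shows that $D(h)$ has the product form $X\times\bigl(X^*\cap P_2D(g^*)\bigr)$, hence $D(h)=X\times P_2D(h)$. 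Item~2 is obtained symmetrically: now $P_2D(h^*)\subseteq B_{X^{**}}[L]$ is the hypothesis of item~1 of Proposition~\ref{pr:hb} for $g$, which makes $g^*$ $L$-Lipschitz in its second variable with $D(g^*)=P_1D(g^*)\times X^{***}$, and restriction together with the same bookkeeping give $h(x,x^*)\le h(x,z^*)+L\norm{x^*-z^*}$ and $D(h)=P_1D(h)\times X^*$.

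The only step that takes any attention is this last bookkeeping: keeping the canonical embeddings $X\hookrightarrow X^{**}$, $X^*\hookrightarrow X^{***}$ straight while turning the product description of $D(g^*)\subseteq X^{**}\times X^{***}$ into the asserted description of $D(h)\subseteq X\times X^*$, and observing that a product set remains a product after intersecting with $\iota(X\times X^*)$; there is no real obstacle beyond that. Two remarks. As in Proposition~\ref{pr:hb}, the domain equalities genuinely use $L<\infty$, whereas for $L=\infty$ the two Lipschitz-type inequalities are vacuous. And if one prefers a self-contained argument, the proof of Proposition~\ref{pr:hb} can be mirrored directly: write $h(x,x^*)=\sup_{(y^*,y^{**})\in X^*\times X^{**}}\inner{x}{y^*}+\inner{x^*}{y^{**}}-h^*(y^*,y^{**})$ by Moreau's theorem, note that only $(y^*,y^{**})\in D(h^*)$ contribute to the supremum, bound $\inner{x}{y^*}\le\inner{z}{y^*}+L\norm{x-z}$ (resp.\ $\inner{x^*}{y^{**}}\le\inner{z^*}{y^{**}}+L\norm{x^*-z^*}$) for such $y^*$ (resp.\ $y^{**}$), and pass to the supremum.
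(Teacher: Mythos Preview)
Your proposal is correct and follows essentially the same route as the paper: apply Proposition~\ref{pr:hb} to $g=h^*$ (viewed as a function on $X^*\times X^{**}$) and use Moreau's theorem $h=h^{**}$ to pull the conclusions back to $h$. The paper's proof is just the two-line version of what you wrote; your explicit handling of the embeddings $X\hookrightarrow X^{**}$, $X^*\hookrightarrow X^{***}$ and the intersection $D(h)=\iota(X\times X^*)\cap D(g^*)$ merely spells out what the paper leaves implicit.
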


\begin{proof}
  Since $h$ is a proper closed convex function,
  according to Moreau Theorem we have
  \[
  h(x,x^*)=h^{**}(x,x^*),\qquad \forall (x,x^*)\in X\times X^*.
  \]
  Items 1 and 2 follows directly from Proposition~\ref{pr:hb}
  applied to $h^*$ and the above equation.
\end{proof}

We end this section combining the previous result.  

\begin{corollary}
  \label{cr:bas}
  If $h:X\times X^*\to\BR$, 
 $\mathcal{J}h\geq \pi$ 
 and 
$P_2(D(h))\subseteq B_{X^*}[L]$
  then\\
  1) $h^*(x^*,\cdot)$ is $L$-Lipschitz continuous for any
  $x^*\in P_1(D(h^*))$;\\
  2) $D(h^*)=P_1(D(h^*))\times X^{**}\subseteq B_{X^*}[L]\times X^{**}$;\\
  3) $P_1D(h^*)\subset B_{X^*}[L]$, $P_2D(h^*)=X^{**}$.
\end{corollary}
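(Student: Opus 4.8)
Here is the plan I would follow.

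\medskip

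\noindent\textbf{Overview and reduction.} The three assertions are really one statement packaged three ways. All of the ``$\le$'' and ``$=$'' bookkeeping comes for free from Proposition~\ref{pr:hb}(1), which requires no regularity of $h$: from $P_2(D(h))\subseteq B_{X^*}[L]$ it yields both the estimate
\[
h^*(x^*,x^{**})\le h^*(x^*,z^{**})+L\norm{x^{**}-z^{**}},\qquad \forall x^*\in X^*,\ x^{**},z^{**}\in X^{**},
\]
and the product structure $D(h^*)=(P_1D(h^*))\times X^{**}$. The only genuinely new input needed is the bound $P_1D(h^*)\subseteq B_{X^*}[L]$, and this is the one place where the hypothesis $\mathcal Jh\ge\pi$ is used. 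One may assume $h$ is proper: since $\mathcal Jh\ge\pi$ rules out $h\equiv+\infty$, the conjugate $h^*$ never takes the value $-\infty$; and if $h$ took the value $-\infty$ then $h^*\equiv+\infty$, so $D(h^*)=\emptyset$ and items 1)--2) would hold vacuously. So from now on $h^*$ is a proper closed convex function on $X^*\times X^{**}$, i.e.\ $D(h^*)\ne\emptyset$.

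\medskip

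\noindent\textbf{Proof of 1) and of the equalities in 2)--3).} Fix $x^*\in P_1(D(h^*))$ and choose $x^{**}_0\in X^{**}$ with $h^*(x^*,x^{**}_0)<\infty$; since $h^*>-\infty$ everywhere, $h^*(x^*,x^{**}_0)\in\R$. Plugging $z^{**}=x^{**}_0$ into the estimate above shows $h^*(x^*,x^{**})\in\R$ for every $x^{**}\in X^{**}$, so $h^*(x^*,\cdot)$ is real-valued on all of $X^{**}$; applying the estimate once with $(x^{**},z^{**})$ and once with $(z^{**},x^{**})$ gives $|h^*(x^*,x^{**})-h^*(x^*,z^{**})|\le L\norm{x^{**}-z^{**}}$, i.e.\ $h^*(x^*,\cdot)$ is $L$-Lipschitz continuous, proving 1). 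The identity $D(h^*)=(P_1D(h^*))\times X^{**}$ from Proposition~\ref{pr:hb}(1) is exactly the equality in 2); combined with $D(h^*)\ne\emptyset$ it also forces $P_2D(h^*)=X^{**}$, which is the second assertion of 3).

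\medskip

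\noindent\textbf{The crux: $P_1D(h^*)\subseteq B_{X^*}[L]$.} Since $\mathcal Jh\ge\pi$, Lemma~\ref{lm:dhs} gives
\[
P_1\bigl(D(h^*)\cap(X^*\times X)\bigr)\subseteq \cl_{w*}\conv P_2(D(h))\subseteq \cl_{w*}\conv B_{X^*}[L]=B_{X^*}[L],
\]
where the last equality uses that the closed ball $B_{X^*}[L]$ is convex and, by Banach--Alaoglu, weak-$*$ closed. On the other hand, by the product structure already established, $D(h^*)\cap(X^*\times X)=(P_1D(h^*))\times X$, whose first projection is precisely $P_1D(h^*)$; hence $P_1D(h^*)\subseteq B_{X^*}[L]$. (Alternatively, one can argue directly by positive homogeneity: for $x^*\in P_1D(h^*)$ the $L$-Lipschitz bound on $X\subseteq X^{**}$ together with $h^*(x^*,y)\ge\inner{y}{x^*}$ gives $\inner{y}{x^*}\le h^*(x^*,0)+L\norm{y}$ for all $y\in X$; replacing $y$ by $ty$ and letting $t\to\infty$ yields $\inner{y}{x^*}\le L\norm{y}$, so $\norm{x^*}\le L$.) With this, 2) is complete, since $D(h^*)=(P_1D(h^*))\times X^{**}\subseteq B_{X^*}[L]\times X^{**}$, and 3) is complete as well. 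The main obstacle is exactly this last step — producing the uniform bound on $P_1D(h^*)$ — while everything else is a mechanical consequence of Proposition~\ref{pr:hb}(1).
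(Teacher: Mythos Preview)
Your proof is correct and follows essentially the same route as the paper's: Proposition~\ref{pr:hb}(1) supplies the $L$-Lipschitz estimate and the product structure $D(h^*)=(P_1D(h^*))\times X^{**}$, and then Lemma~\ref{lm:dhs} (applied to the slice $D(h^*)\cap(X^*\times X)=(P_1D(h^*))\times X$) gives the inclusion $P_1D(h^*)\subseteq B_{X^*}[L]$. Your write-up is more explicit than the paper's about the degenerate cases and about invoking Banach--Alaoglu, and the self-contained homogeneity argument you offer as an alternative to Lemma~\ref{lm:dhs} is a nice bonus not present in the paper.
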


\begin{proof}
  Item 1), and the equality in item 2) follows from
  Proposition~\ref{pr:hb}(item 1). In particular
  \[
  D(h^*)\cap(X^*\times X)=(P_1D(h^*))\times X.
  \]
  To end the prove of item 2), use the above equality and the first
  result in Lemma~\ref{lm:dhs}. Item 3) follows trivially from item
  2).
\end{proof}

\begin{corollary}
  \label{cr:bas2}
  If $h:X\times X^*\to\BR$ is a proper closed convex
  function, $h\geq\pi$ and $P_1(D(h^*))\subseteq B_{X^*}[L]$
  then\\
  1) $h(\cdot,x^*)$ is $L$-Lipschitz continuous for any
  $x^*\in P_2(D(h))$;\\
  2) $D(h)=X\times P_2(D(h))\subseteq X\times B_{X^*}[L]$;\\
  3) $P_2D(h)\subset B_{X^*}[L]$, $P_1(Dh)=X$.
\end{corollary}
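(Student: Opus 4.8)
The plan is to mirror the proof of Corollary~\ref{cr:bas}, simply dualizing every step. Corollary~\ref{cr:bas2} is the statement obtained from Corollary~\ref{cr:bas} by interchanging the roles of the two factors $X$ and $X^*$ (equivalently, by passing from $h$ to $h^*$ and using Moreau's theorem), so the same chain of implications should apply.

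First I would invoke Proposition~\ref{pr:hsb}(item~1): since $h$ is proper closed convex and $P_1(D(h^*))\subseteq B_{X^*}[L]$, we get
\[
h(x,x^*)\leq h(z,x^*)+L\norm{x-z},\qquad\forall x,z\in X,\ x^*\in X^*.
\]
This immediately yields item~1), that $h(\cdot,x^*)$ is $L$-Lipschitz for every $x^*\in P_2(D(h))$ (for such $x^*$ the right-hand side is finite for some $z$, hence $h(\cdot,x^*)$ is finite and $L$-Lipschitz everywhere on $X$), and it also yields the equality $D(h)=X\times P_2(D(h))$ in item~2), because if $(z,x^*)\in D(h)$ then $(x,x^*)\in D(h)$ for every $x\in X$. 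In particular
\[
D(h)\cap(X\times X^*)=D(h)=X\times P_2(D(h)),
\]
so $P_1(D(h))=X$ as soon as $P_2(D(h))\neq\emptyset$ (and $h$ proper guarantees $D(h)\neq\emptyset$). The remaining task for item~2) is to show $P_2(D(h))\subseteq B_{X^*}[L]$; this is where I would apply Lemma~\ref{lm:dh}, which gives $P_2(D(h))\subseteq\cl_{w*}P_1(D(h^*))\subseteq\cl_{w*}B_{X^*}[L]=B_{X^*}[L]$, using that the ball $B_{X^*}[L]$ is weak-$*$ closed. That completes item~2), and item~3) is then just a restatement: $P_2(D(h))\subset B_{X^*}[L]$ from item~2), and $P_1(D(h))=X$ from the product structure.

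The only point that needs a little care — and the mild obstacle compared to Corollary~\ref{cr:bas} — is which weak-star-closure lemma to use. In Corollary~\ref{cr:bas} the hypothesis was $\mathcal{J}h\geq\pi$ and one used Lemma~\ref{lm:dhs} to push $D(h^*)\cap(X^*\times X)$ into $\cl_{w*}\conv P_2(D(h))$; here the hypothesis is $h\geq\pi$, so the appropriate tool is Lemma~\ref{lm:dh} rather than Lemma~\ref{lm:dhs}, and it is applied directly to $h$ (not to $h^*$). One must also note that $B_{X^*}[L]$, being a closed ball of the dual, is weak-$*$ closed (indeed weak-$*$ compact by Banach--Alaoglu), so taking weak-$*$ closure does not enlarge it; this is the analogue of the remark used implicitly in Corollary~\ref{cr:bas}. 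Beyond that, everything is routine, and the proof can be written in a few lines exactly paralleling the proof of Corollary~\ref{cr:bas}.
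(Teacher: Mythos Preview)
Your proposal is correct and follows essentially the same route as the paper: item~1 and the equality in item~2 come from Proposition~\ref{pr:hsb}(item~1), the inclusion in item~2 from Lemma~\ref{lm:dh} together with the weak-$*$ closedness of $B_{X^*}[L]$, and item~3 is read off from item~2. Your observation that here one must invoke Lemma~\ref{lm:dh} (because the hypothesis is $h\geq\pi$) rather than Lemma~\ref{lm:dhs} is exactly the point.
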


\begin{proof}
  Item 1), and the equality in item 2) follows from
  Proposition~\ref{pr:hsb}(item 1), while the inclusion
  in item 2) follows from  Lemma~\ref{lm:dh}.
  Item 3) follows trivially from item 2).
\end{proof}

\section{Some geometric properties of maximal monotone operators
and their  relations with their Fitzpatrick families}
\label{sec:gp}

In this section we discuss the relation of some elementary
geometric properties of maximal monotone operators  with
elementary geometric and analytical (continuity) properties of the
functions on Fitzpatrick families of these operators.

First we prove invariance of the closure of the projections of the domains
along each Fitzpatrick family.
\begin{lemma}[Invariant features of the domains]
  \label{lm:dtcl}
  Let $T:X\tos X^*$ be maximal monotone. For any $h\in
  \mathcal{F}_T$,
  \[
   \cl\;P_1D(h)=\cl\;\conv\;D(T),\qquad
   \cl_{w*} P_2D(h)=\cl_{w*}\conv\;R(T).
  \]
\end{lemma}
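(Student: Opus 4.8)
The plan is to realize the two desired identities as the coincidence of both ends of a chain of inclusions, obtained by sandwiching $D(h)$ between the domains of the two extreme elements $\varphi_T$ and $\sigma_T$ of the Fitzpatrick family $\mathcal{F}_T$. Since $\varphi_T$ is the smallest and $\sigma_T$ the largest element of $\mathcal{F}_T$, any $h\in\mathcal{F}_T$ satisfies $\varphi_T\le h\le\sigma_T$, hence $D(\sigma_T)\subseteq D(h)\subseteq D(\varphi_T)$; applying the projections $P_1,P_2$ and the closure operators $\cl$, $\cl_{w*}$ preserves these inclusions, so it suffices to establish
\[
\cl\conv D(T)\subseteq\cl P_1D(\sigma_T),\qquad \cl P_1D(\varphi_T)\subseteq\cl\conv D(T),
\]
together with the analogous pair for $P_2D(\sigma_T)$, $P_2D(\varphi_T)$, $R(T)$ and $\cl_{w*}$.

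For the lower inclusions I would use only that $\sigma_T\in\mathcal{F}_T$. Being convex, $\sigma_T$ has convex domain, so $P_1D(\sigma_T)$ and $P_2D(\sigma_T)$ are convex; and since $\sigma_T(x,x^*)=\inner{x}{x^*}<\infty$ for every $(x,x^*)\in T$, we have $T\subseteq D(\sigma_T)$, whence $D(T)=P_1T\subseteq P_1D(\sigma_T)$ and $R(T)=P_2T\subseteq P_2D(\sigma_T)$. Taking convex hulls (which leaves the already convex right-hand sides unchanged) and then closures gives $\cl\conv D(T)\subseteq\cl P_1D(\sigma_T)$ and $\cl_{w*}\conv R(T)\subseteq\cl_{w*}P_2D(\sigma_T)$.

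For the upper inclusions I would invoke Lemma~\ref{lm:dhs}. By \eqref{eq:usefull.2}, $\varphi_T=\mathcal{J}(\pi+\delta_T)$, and by \eqref{eq:icb}, $\varphi_T\ge\pi$; moreover $D(\pi+\delta_T)=T$, so $P_1D(\pi+\delta_T)=D(T)$ and $P_2D(\pi+\delta_T)=R(T)$. Applying Lemma~\ref{lm:dhs} with its $h$ replaced by $\pi+\delta_T$ — legitimate since its $\mathcal{J}$-transform $\varphi_T$ majorizes $\pi$ — yields
\[
P_1D(\varphi_T)\subseteq\cl\conv D(T),\qquad P_2D(\varphi_T)\subseteq\cl_{w*}\conv R(T),
\]
and closing up both sides finishes the upper inclusions.

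Putting the pieces together,
\[
\cl\conv D(T)\subseteq\cl P_1D(\sigma_T)\subseteq\cl P_1D(h)\subseteq\cl P_1D(\varphi_T)\subseteq\cl\conv D(T),
\]
so all four sets coincide, and the identical argument in $X^*$ gives $\cl_{w*}\conv R(T)=\cl_{w*}P_2D(\sigma_T)=\cl_{w*}P_2D(h)=\cl_{w*}P_2D(\varphi_T)$. I do not expect a genuine obstacle here: the single nontrivial ingredient is Lemma~\ref{lm:dhs}, and the remainder is set-theoretic bookkeeping — one only has to keep track that projections commute with convex hulls, that they respect inclusions, and that $\varphi_T$ and $\sigma_T$ are truly the extreme members of $\mathcal{F}_T$ so that the sandwich $D(\sigma_T)\subseteq D(h)\subseteq D(\varphi_T)$ is available.
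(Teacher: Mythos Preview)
Your proof is correct and follows essentially the same approach as the paper's: both arguments sandwich $D(h)$ between $T$ and $D(\varphi_T)$ and invoke Lemma~\ref{lm:dhs} to bound the projections of $D(\varphi_T)$. The only cosmetic difference is that the paper applies Lemma~\ref{lm:dhs} to $\sigma_T$ (and then reads off $\cl P_1 D(\sigma_T)=\cl\conv D(T)$ from the definition~\eqref{eq:fitz.max}), whereas you apply it directly to $\pi+\delta_T$; since $\mathcal{J}\sigma_T=\mathcal{J}(\pi+\delta_T)=\varphi_T$ by~\eqref{eq:usefull.2}, these are the same application, and your choice is marginally cleaner because $D(\pi+\delta_T)=T$ requires no further unpacking.
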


\begin{proof}
  Applying Lemma~\ref{lm:dhs} to ${\sigma}_T$,
  $\varphi_T=\mathcal{J}{\sigma}_T$ and using
  definition~\eqref{eq:fitz.max} we conclude that
  \[
   \cl\,P_1D(\varphi_T)\subseteq\cl\, P_1(D{\sigma}_T)=
   \cl\;\conv\;D(T),\qquad
   \cl_{w*} P_2(\varphi_T)\subseteq
   \cl_{w*} P_2({\sigma}_T)=
\cl_{w*}\conv\;R(T).
  \]
  Take $h\in\mathcal{F}_T$.
  Since $\varphi_T\leq h$,
  \[
  T\subseteq D(h)\subseteq D(\varphi_T)
  \]
   where the first inclusion follows from Theorem~\ref{th:fitz}.
   To end the proof, combine the two above equations.
\end{proof}

Lemma~\ref{lm:dtcl} is tight in the following sense: for a maximal
monotone operator $T$ and $h\in\mathcal{F}_T$,
\begin{equation}
  \label{eq:tg}
T\subseteq D(h)\subseteq \cl\;\conv P_1D(T)\times 
\cl_{w^*}\conv P_2
\end{equation}
and these inclusion may hold as equalities. For example,
let
\[
T:\R\to\R,\quad T(x)=x.
\]
Then $T\subsetneq D(T)\times R(T)=\R^2$, and 
 the first and the second  inclusion in
\eqref{eq:tg} holds as equalities for $h={\sigma}_T$ and
$h=\varphi_T$ respectively. Indeed, for this operator,
\[
{\sigma}_T(x,x^*)=x^2+\delta_{0}(x-x^*),\qquad
\varphi_T(x,x^*)=(x+x^*)^2/4.
\]

It had long been known that there exists a duality relation
between the support function of the domain and the recession
directions of the conjugate~\cite{MR0274683}.
Lemma~\ref{lm:dtcl} will be used to prove that
there exists a kind of ``duality relation'' between the domain
and the range of a maximal monotone operator.

\begin{lemma}[Domain/Range duality relation]
  \label{pr:rds}
  If $T:X\tos X^*$ is maximal monotone then
  \[
    D\left( \delta_{D(T)}^*\right)\subseteq 0^+ ( \cl_{w*}\;\conv R(T)  ),\qquad
     D\left(\delta_{R(T)}^*\right)\cap X\subseteq 0^+ ( \cl\;\conv D(T)  ).
  \]
\end{lemma}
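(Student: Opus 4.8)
The plan is to deduce both inclusions from Proposition~\ref{pr:dr} together with Lemma~\ref{lm:dtcl}, by feeding into them a carefully chosen function $h \in \mathcal{F}_T$. The natural candidate is $h = \sigma_T = \cl\,\conv\,(\pi + \delta_T)$, which lies in $\mathcal{F}_T$ and whose $\mathcal{J}$-conjugate is $\varphi_T$. The key observation I would exploit is that the domain of $\sigma_T$ projects onto (closures of convex hulls of) $D(T)$ and $R(T)$ via Lemma~\ref{lm:dtcl}, so the recession-cone statement of Proposition~\ref{pr:dr}, namely $D(\mathcal{J}\delta_{D(h)}) \subseteq 0^+ D(\mathcal{J}h)$, will translate into a statement about $0^+(\cl\,\conv D(T))$ and $0^+(\cl_{w*}\conv R(T))$ once I control the relevant projections.

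For the first inclusion, I would first argue that $D(\delta_{D(T)}^*)$ can be related to $P_2 D(\mathcal{J}\delta_{D(\sigma_T)})$ or to a similar projection: since $P_1 D(\sigma_T)$ has closure $\cl\,\conv D(T)$, we have $\delta_{D(\sigma_T)} \le \delta_{D(T) \times X^*}$ up to taking closures, and so $\delta_{D(\sigma_T)}^*$ dominates $\delta_{P_1 D(\sigma_T)}^* \otimes (\text{the } 0 \text{ functional in the } X^{**} \text{ slot})$ appropriately; more precisely $\mathcal{J}\delta_{D(\sigma_T)}(x,x^*) = \delta_{D(\sigma_T)}^*(x^*,x)$ is finite only where $x^* \in D(\delta_{D(T)}^*)$ (again after passing to $\cl\,\conv$, which does not change the support function). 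Combining this with Proposition~\ref{pr:dr} applied to $h = \sigma_T$ gives that any such $x^*$ lies in $0^+ D(\mathcal{J}\sigma_T) = 0^+ D(\varphi_T)$, and then projecting via the second equality of Lemma~\ref{lm:dhs} (which identifies $P_2 D(\mathcal{J}\sigma_T)$ with a subset of $\cl_{w*}\conv P_2 D(\sigma_T) = \cl_{w*}\conv R(T)$) and using that the recession cone is preserved under these projections yields $D(\delta_{D(T)}^*) \subseteq 0^+(\cl_{w*}\conv R(T))$. The second inclusion is entirely symmetric, using $\varphi_T = \mathcal{J}\sigma_T$ in place of $\sigma_T$ (note $\varphi_T \in \mathcal{F}_T$ as well), swapping the roles of $D$ and $R$, of $X$ and $X^*$, and of the strong and weak-$*$ topologies, and invoking the first equality of Lemma~\ref{lm:dhs} together with Lemma~\ref{lm:dtcl}'s statement $\cl\,P_1 D(h) = \cl\,\conv D(T)$; the extra intersection with $X$ in the second inclusion reflects that $P_2 D(\varphi_T^*)$ a priori lives in $X^{**}$ and we restrict attention to its trace on $X$.

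The main obstacle I anticipate is the bookkeeping needed to pass cleanly between $0^+$ of a set and $0^+$ of the closure (or weak-$*$ closure) of its convex hull, and between a recession direction of a domain and a recession direction of its projection — in a non-reflexive space one must be careful that projections of recession cones behave well and that the weak-$*$ closures in Lemma~\ref{lm:dhs} do not destroy the recession structure. I would handle this by working directly from the definition $0^+C = \{u : x + \lambda u \in C \ \forall x \in C, \lambda \ge 0\}$: if $(x,x^*) \in D(\sigma_T)$ and $x^* \in D(\delta_{D(T)}^*)$, I want to show $(x, x^* + \lambda w^*)$ stays in $D(\varphi_T)$ for the appropriate $w^*$, which is exactly the content of Proposition~\ref{pr:e} (equivalently Proposition~\ref{pr:dr}) applied to $f = \sigma_T$; the projection onto the second coordinate then lands in $\cl_{w*}\conv R(T)$ by Lemma~\ref{lm:dhs}, and since this holds for every base point the recession claim follows. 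The symmetric argument for the domain side is the same calculation with $f = \varphi_T$.
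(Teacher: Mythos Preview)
Your proposal is correct and follows essentially the paper's approach: feed $h=\sigma_T$ into Proposition~\ref{pr:dr} to obtain $(0,u^*)$ (resp.\ $(u,0)$) in $0^+D(\mathcal{J}\sigma_T)=0^+D(\varphi_T)$, project, and invoke Lemma~\ref{lm:dtcl}. Two small remarks: the paper uses $\sigma_T$ for \emph{both} inclusions (not $\varphi_T$ for the second as you suggest), via the clean identities $\delta_{D(T)}^*(u^*)=\delta_T^*(u^*,0)=(\mathcal{J}\delta_{D(\sigma_T)})(0,u^*)$ and $\delta_{R(T)}^*(u)=\delta_T^*(0,u)=(\mathcal{J}\delta_{D(\sigma_T)})(u,0)$, which makes the two halves perfectly parallel and avoids ever touching $\varphi_T^*$; and in your last paragraph the roles of base point and direction are swapped (you write ``$x^*\in D(\delta_{D(T)}^*)$'' where you mean the direction $u^*$), though the intended argument is clear and sound.
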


\begin{proof}
  To prove the first inclusion, suppose that $u^*\in
  D\left(\delta_{D(T)}^*\right)$.  Then
  \begin{align*}
    \infty>\delta_{D(T)}^*(u^*)=\delta_T^*(u^*,0)
    =(\mathcal{J}\delta_T)(0,u^*)=(\mathcal{J}
    \delta_{D(\sigma_T)})(u^*,0),
  \end{align*}
  and it follows from Proposition~\ref{pr:dr} that $(0,u^*)\in
  0^+D(\mathcal{J}\sigma_T)=0^+D(\varphi_T)$. Therefore,
  $u^*\in 0^+P_2D(\varphi_T)$ and the conclusion follows from
  this inclusion and Lemma~\ref{lm:dtcl}.

  To prove the second inclusion, suppose that $u\in D(\delta_{R(T)}^*)\cap X$.
  Then
  \begin{align*}
    \infty>\delta_{R(T)}^*(u)=\delta_T^*(0,u)
    =(\mathcal{J}\delta_T)(u,0)=\left(\mathcal{J}
    \delta_{D(\sigma_T)}\right)(u,0),
  \end{align*}
  and it follows from Proposition~\ref{pr:dr} that $(u,0)\in
  0^+D(\mathcal{J}\sigma_T)=0^+D(\varphi_T)$. Therefore,
  $u\in 0^+P_1D(\varphi_T)$ and the conclusion follows from
  this inclusion and Lemma~\ref{lm:dtcl}.
\end{proof}

In view of the domain/range duality expressed in Lemma~\ref{pr:rds},
maximal monotone operators with bounded domains/ranges shall have
unbounded range/domains. One of these results is already well know.
The other (which we do not know if it is new) is proved next.

\begin{corollary}
  \label{cr:g}
  If $T:X\tos X^*$ is maximal monotone and $D(T)$ is bounded, then
  $\conv R(T)$ is weak-$*$ dense in $X^*$.
\end{corollary}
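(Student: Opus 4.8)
The plan is to read this off almost directly from the domain/range duality of Lemma~\ref{pr:rds}; the point is that boundedness of $D(T)$ forces the support function $\delta_{D(T)}^*$ to be finite on all of $X^*$.

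First I would note that $T$ is nonempty: a monotone operator with empty graph cannot be maximal, since adjoining any single pair $(x,x^*)$ keeps it monotone. Hence $D(T)$ and $R(T)$ are nonempty. Writing $L:=\sup_{x\in D(T)}\norm{x}<\infty$, for every $u^*\in X^*$ we get $-\infty<\delta_{D(T)}^*(u^*)=\sup_{x\in D(T)}\inner{x}{u^*}\leq L\norm{u^*}<\infty$, where the lower bound uses $D(T)\neq\emptyset$. Therefore $D(\delta_{D(T)}^*)=X^*$.

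Next I would apply the first inclusion in Lemma~\ref{pr:rds} to obtain
\[
X^*=D(\delta_{D(T)}^*)\subseteq 0^+\bigl(\cl_{w*}\conv R(T)\bigr),
\]
so the nonempty weak-$*$ closed convex set $C:=\cl_{w*}\conv R(T)$ has recession cone equal to the whole of $X^*$. Finally I would invoke the elementary fact that a nonempty set $C$ with $0^+C=X^*$ must equal $X^*$: fixing any $x_0^*\in C$ and an arbitrary $v^*\in X^*$, the direction $v^*-x_0^*$ lies in $0^+C$, whence $v^*=x_0^*+(v^*-x_0^*)\in C$. Thus $\cl_{w*}\conv R(T)=X^*$, which is exactly the statement that $\conv R(T)$ is weak-$*$ dense in $X^*$.

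I do not expect a genuine obstacle here: essentially all the substance is packed into Lemma~\ref{pr:rds}, and the remaining steps — finiteness of the support function of a bounded nonempty set, and the observation that a convex set recessive in every direction is the whole space — are routine and require only a word of care.
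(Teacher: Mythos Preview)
Your argument is correct and follows exactly the paper's route: observe that boundedness of $D(T)$ gives $D(\delta_{D(T)}^*)=X^*$ and then invoke Lemma~\ref{pr:rds}. The paper's proof is a two-line sketch leaving the recession-cone step implicit; you have simply spelled out the details (nonemptiness of $T$, and why $0^+C=X^*$ forces $C=X^*$).
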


\begin{proof}
  Note that if $D(T)$ is bounded then $D((\delta_{D(T)})^*)=X^*$
  and use Lemma~\ref{pr:rds}
\end{proof}

Now we will characterize the Fitzpatrick families of bounded range/domain
maximal monotone operators.

\begin{lemma}
  \label{lm:pbr}
  Let $T:X\tos X^*$ be a maximal monotone operator. The following
  conditions are equivalent:
  \begin{enumerate}
  \item $T$ has a bounded range;
  \item for any $h\in\mathcal{F}_T$, $P_2(D(h))$ is bounded; 
  \item there exists $h\in\mathcal{F}_T$ such that $P_2(D(h))$ is bounded;
  \item the family 
    \[
    \{h(\cdot,x^*):X\to\BR,\;x\mapsto h(x,x^*)\;|\;
    h\in\mathcal{F}_T,x^*\in P_2D(h)\}
    \]
    is an equi-Lipschitz family of real valued functions, that is,
    these functions are real-valued and there exists $0\leq L<\infty$ such
    that, 
    \[
    |h(x,x^*)-h(z,x^*)|\leq L\norm{x-z},\qquad
    \forall x,z\in X, h\in\mathcal{F}_T, x^*\in P_2D(h).
    \]
  \item there exists $h\in\mathcal{F}_T$ such that the family
    \[
    \{h(\cdot,x^*):X\to\BR,\;x\mapsto h(x,x^*)\;|\;
    x^*\in P_2D(h)\}
    \]
    is an equi-Lipschitz family of real valued functions, that is,
    these functions are real-valued and there exists $0\leq L<\infty$ such
    that, 
    \[
    |h(x,x^*)-h(z,x^*)|\leq L\norm{x-z},\qquad
    \forall x,z\in X,  x^*\in P_2D(h).
    \]
  \end{enumerate}
\end{lemma}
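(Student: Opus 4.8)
The plan is to prove the cyclic chain of implications $(1)\Rightarrow(2)\Rightarrow(3)\Rightarrow(1)$ together with $(1)\Rightarrow(4)\Rightarrow(5)\Rightarrow(3)$, using the machinery of Section~\ref{sec:dom} and the invariance Lemma~\ref{lm:dtcl}. The geometric content of the equivalence is that boundedness of $R(T)$ transfers, via $\mathcal{F}_T$, to boundedness of $P_2D(h)$, and this in turn forces an equi-Lipschitz modulus for the partial functions $h(\cdot,x^*)$.

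First I would handle $(1)\Rightarrow(2)$. Suppose $R(T)\subseteq B_{X^*}[L]$ for some $L<\infty$. Fix $h\in\mathcal{F}_T$. By Lemma~\ref{lm:dtcl}, $\cl_{w*}P_2D(h)=\cl_{w*}\conv R(T)\subseteq\cl_{w*}\conv B_{X^*}[L]=B_{X^*}[L]$ (the closed ball is weak-$*$ closed and convex). Hence $P_2D(h)\subseteq B_{X^*}[L]$ is bounded, which also fixes the uniform bound $L$ independent of $h$ — this will be needed for the \emph{equi}-Lipschitz claims. The implication $(2)\Rightarrow(3)$ is trivial since $\mathcal{F}_T\neq\emptyset$. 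For $(3)\Rightarrow(1)$: if some $h\in\mathcal{F}_T$ has $P_2D(h)$ bounded, then since $T\subseteq D(h)$ (Theorem~\ref{th:fitz}) we get $R(T)=P_2T\subseteq P_2D(h)$ bounded.

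For the Lipschitz statements I would invoke Corollary~\ref{cr:bas}. Assuming $(1)$, pick $L$ with $R(T)\subseteq B_{X^*}[L]$; for every $h\in\mathcal{F}_T$ we have (from the argument above) $P_2D(h)\subseteq B_{X^*}[L]$, and every $h\in\mathcal{F}_T$ is closed convex with $\mathcal{J}h\geq\pi$ by \eqref{eq:icb}. Thus Corollary~\ref{cr:bas}(1) applies to each such $h$: $h^*(x^*,\cdot)$ is $L$-Lipschitz, and more to the point I want the Lipschitz property of $h(\cdot,x^*)$ itself. For this I would also bring in the dual side: Corollary~\ref{cr:bas}(3) gives $P_1D(h^*)\subseteq B_{X^*}[L]$, and then Corollary~\ref{cr:bas2}(1) (applicable since $h\geq\pi$ is proper closed convex, which holds for members of $\mathcal{F}_T$) yields that $h(\cdot,x^*)$ is $L$-Lipschitz for each $x^*\in P_2D(h)$, with the \emph{same} $L$ for all $h\in\mathcal{F}_T$ and all such $x^*$. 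That is exactly $(4)$; I should also note real-valuedness: $L$-Lipschitz on all of $X$ plus finite at one point (e.g.\ where $h(x,x^*)=\inner{x}{x^*}$ for $(x,x^*)\in T$, or anywhere in $D(h)$, using Corollary~\ref{cr:bas2}(2) that $D(h)=X\times P_2D(h)$) forces finiteness everywhere. Then $(4)\Rightarrow(5)$ is trivial ($\mathcal{F}_T\neq\emptyset$). For $(5)\Rightarrow(3)$: if $h(\cdot,x^*)$ is real-valued for every $x^*\in P_2D(h)$ and the family is equi-Lipschitz with constant $L$, I would show $P_2D(h)\subseteq B_{X^*}[L]$ directly: for $x^*\in P_2D(h)$ and any $x,z\in X$, $\inner{x}{x^*}\leq h(x,x^*)\leq h(z,x^*)+L\norm{x-z}$ (using $h\geq\pi$), so $\inner{x-z}{x^*}\leq h(z,x^*)+L\norm{x-z}$; fixing $z$ and taking $x=z+tw$, $t\to\infty$, over $\norm{w}\le 1$ forces $\norm{x^*}\leq L$.

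The main obstacle I expect is organizational rather than deep: carefully threading the uniform constant $L$ through Lemma~\ref{lm:dtcl} and both Corollaries~\ref{cr:bas} and~\ref{cr:bas2} so that the bound in $(4)$ genuinely does not depend on $h$, and checking the hypotheses of Corollary~\ref{cr:bas2} (proper closed convex, which needs $\varphi_T\le h$ so $h\not\equiv+\infty$, and $h>-\infty$ since $h\geq\pi$) hold for every member of $\mathcal{F}_T$. The one real subtlety is deducing the Lipschitz continuity of $h(\cdot,x^*)$ (the primal partial function) rather than of $h^*(x^*,\cdot)$; the route through $P_1D(h^*)\subseteq B_{X^*}[L]$ and Corollary~\ref{cr:bas2} is what makes this work, and I would make sure that chain of inclusions is spelled out, since it is the crux linking the "bounded range" and "equi-Lipschitz" formulations.
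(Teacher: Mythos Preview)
Your proposal is correct and follows essentially the same route as the paper: the equivalence of (1)--(3) via Lemma~\ref{lm:dtcl} and $T\subseteq D(h)$, then $(1)\Rightarrow(4)$ by chaining Corollary~\ref{cr:bas} (to get $P_1D(h^*)\subseteq B_{X^*}[L]$) with Proposition~\ref{pr:hsb}/Corollary~\ref{cr:bas2} (to obtain the uniform $L$-Lipschitz bound on $h(\cdot,x^*)$). The only minor difference is in closing the loop: the paper argues $(5)\Rightarrow(1)$ by showing $h^*(x^*,x)=\infty$ whenever $\norm{x^*}>L$ (hence $P_2D(\mathcal{J}h)\subseteq B_{X^*}[L]$ and then $R(T)\subseteq P_2D(\mathcal{J}h)$), whereas you argue $(5)\Rightarrow(3)$ by bounding $P_2D(h)$ directly from $h\geq\pi$ and the Lipschitz estimate---both are valid and amount to the same computation viewed from dual sides.
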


\begin{proof}
  The equivalence between items 1, 2, and 3 follows trivially from
  Lemma~\ref{lm:dtcl}.
  
  Suppose that item 1 holds, which means that there exists $0\leq L<
  \infty $ such that
  \[
  R(T)\subseteq B_{X^*}[L]
  \]
  Take $h\in \mathcal{F}_T$.  From Lemma~\ref{lm:dtcl} and these inclusion
  it follows that 
  $P_2D(h)\subseteq B_{X^*}[L]$. Using this inclusion and
  Corollary~\ref{cr:bas} we conclude that $P_1D(h^*)\subseteq
  B_{X^*}[L]$. Hence, using also Proposition~\ref{pr:hsb}(item 1) we
  conclude that item 4 holds for such a $L$.

  Item 4 trivially implies item 5. 

  Suppose that item 5 holds for some $h\in\mathcal{F}_T$. Take
  \[
  x^*\in X^*,\qquad \norm{x^*}>L.
  \]
  There exists $(y_0,y_0^*)\in T$ and so, $y_0^*\in P_2D(h)$
  \begin{align*}
    h^*(x^*,x)&=\sup_{(y,y^*)\in X\times X^*}\inner{y}{x^*}+\inner{x}{y^*}-h(y,y^*)
   \\
   &\geq\sup_{y\in X}\;\;\inner{y}{x^*}+\inner{x}{y_0^*}-h(y,y_0^*)
 \\
   &\geq\sup_{y\in X}\;\;\inner{y}{x^*}+\inner{x}{y_0^*}-h(0,y_0^*)-L\norm{y}\\
   &=\inner{x}{y_0^*}-h(0,y_0^*)+\sup_{y\in X}\;\;\inner{y}{x^*}
   -L\norm{y}=\infty
  \end{align*}
  Therefore, $P_2D(\mathcal{J}h)\subseteq B_{X^*}[L]$ and using also
  Theorem~\ref{th:fitz} we conclude that item 1 holds.
\end{proof}

\begin{lemma}
  \label{lm:pbd}
  Let $T:X\tos X^*$ be a maximal monotone operator. The following conditions
  are equivalent:
  \begin{enumerate}
  \item $T$ has a bounded domain;
  \item for any $h\in\mathcal{F}_T$, $P_1(D(h))$ is bounded; 
  \item there exists $h\in\mathcal{F}_T$ such that $P_1(D(h))$ is bounded;
  \item the family 
    \[
    \{h(x,\cdot):X^*\to\BR,\;x^*\mapsto h(x,x^*)\;|\;
    h\in\mathcal{F}_T, h\text{ $s\times w*$ closed},\, x\in P_1D(h)\}
    \]
    is an equi-Lipschitz family of real valued functions.
  \item there exists $h\in\mathcal{F}_T$, $h$ strong$\times$weak$*$ closed, such
    that the family
    \[
    \{h(x,\cdot):X^*\to\BR,\;x^*\mapsto h(x,x^*)\;|\;
    x^*\in P_2D(h)\}
    \]
    is an equi-Lipschitz family of real valued functions.
  \end{enumerate}
\end{lemma}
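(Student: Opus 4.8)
The plan is to transpose the proof of Lemma~\ref{lm:pbr} from the second coordinate to the first, with one structural change: where the bounded-range argument exploited Moreau's theorem (through Corollary~\ref{cr:bas} and Proposition~\ref{pr:hsb}), the bounded-domain argument must instead exploit the biconjugation identity $\mathcal{J}\mathcal{J}h=h$, which holds precisely for the $s\times w*$ closed representations singled out in items 4 and 5. First I would dispose of the equivalence of items 1, 2 and 3: by Lemma~\ref{lm:dtcl}, $\cl P_1D(h)=\cl\,\conv D(T)$ for every $h\in\mathcal{F}_T$, and a set is bounded exactly when its closed convex hull is, so $D(T)$ is bounded iff $P_1D(h)$ is bounded for some, equivalently every, $h\in\mathcal{F}_T$.

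For $1\Rightarrow 4$ I would fix an $s\times w*$ closed $h\in\mathcal{F}_T$, assume $D(T)\subseteq B_X[L]$, and set $g=\mathcal{J}h$. Since $\mathcal{F}_T$ is $\mathcal{J}$-invariant, $g\in\mathcal{F}_T$, so Lemma~\ref{lm:dtcl} gives $P_1D(g)\subseteq\cl\,\conv D(T)\subseteq B_X[L]$. Proposition~\ref{pr:hb}(item~2) applied to $g$ then yields $g^*(x^*,x^{**})\le g^*(z^*,x^{**})+L\norm{x^*-z^*}$ for all $x^*,z^*\in X^*$ and $x^{**}\in X^{**}$; taking $x^{**}=x\in X$ and recalling $g^*(x^*,x)=\mathcal{J}g(x,x^*)$ shows $x^*\mapsto\mathcal{J}g(x,x^*)$ is $L$-Lipschitz for every $x$. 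Finally, since $h$ is convex and lower semicontinuous for the strong$\times$weak-$*$ topology, it is the supremum of those affine functions below it that are continuous for that topology, and these are exactly the maps $(x,x^*)\mapsto\inner{x}{y^*}+\inner{y}{x^*}-c$ with $(y,y^*)\in X\times X^*$; this is the identity $\mathcal{J}\mathcal{J}h=h$, that is $h=\mathcal{J}g$. Hence $h(x,\cdot)=\mathcal{J}g(x,\cdot)$ is $L$-Lipschitz, and for $x\in P_1D(h)$ it is finite somewhere and bounded below by $\inner{x}{\cdot}$, so it is real valued; this gives item 4 with the same constant $L$.

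The implication $4\Rightarrow 5$ is trivial once one observes that $\mathcal{F}_T$ does contain an $s\times w*$ closed function, namely $\varphi_T=\mathcal{J}{\sigma}_T$, which is $s\times w*$ lower semicontinuous as a supremum of $s\times w*$-continuous affine functions. For $5\Rightarrow 1$, let $h\in\mathcal{F}_T$ be such that the family $\{h(x,\cdot)\mid x\in P_1D(h)\}$ is real valued and equi-Lipschitz with constant $L$. Since $\emptyset\neq T\subseteq D(h)$ by Theorem~\ref{th:fitz}, I would pick $y_0\in P_1D(h)$; then for any $x\in X$ with $\norm{x}>L$ and any $x^*\in X^*$,
\[
\mathcal{J}h(x,x^*)=h^*(x^*,x)\ge\sup_{y^*\in X^*}\bigl(\inner{y_0}{x^*}+\inner{x}{y^*}-h(y_0,y^*)\bigr)\ge\inner{y_0}{x^*}-h(y_0,0)+\sup_{y^*\in X^*}\bigl(\inner{x}{y^*}-L\norm{y^*}\bigr)=\infty,
\]
the last supremum being $+\infty$ because $\norm{x}>L$. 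Hence $P_1D(\mathcal{J}h)\subseteq B_X[L]$, and since $\mathcal{J}h\in\mathcal{F}_T$, Theorem~\ref{th:fitz} gives $T\subseteq D(\mathcal{J}h)$, so $D(T)\subseteq B_X[L]$ is bounded.

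The hard part will be $1\Rightarrow 4$. In Lemma~\ref{lm:pbr} the bound on $P_2D(h)$ passes directly to $P_1D(h^*)$ through Corollary~\ref{cr:bas} because there the relevant ``outer'' factor is all of $X^{**}$; here the bound on $P_1D(h)$ controls only the trace of $P_2D(h^*)$ on $X$, not all of $P_2D(h^*)$, so Proposition~\ref{pr:hsb} cannot be applied to $h$ directly. One is forced to estimate $\mathcal{J}h$ via Proposition~\ref{pr:hb} and then come back to $h$ through $\mathcal{J}\mathcal{J}h=h$, which is exactly why items 4 and 5 must be stated only for strong$\times$weak-$*$ closed representations. (I would also note in passing that the $s\times w*$-closedness of $h$ is not actually needed in the implication $5\Rightarrow 1$.)
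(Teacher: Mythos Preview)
Your proof is correct and follows essentially the same route as the paper's. The only minor difference is in $1\Rightarrow 4$: to obtain $P_1D(\mathcal{J}h)\subseteq B_X[L]$ you invoke the $\mathcal{J}$-invariance of $\mathcal{F}_T$ together with Lemma~\ref{lm:dtcl}, whereas the paper first bounds $P_1D(h)$ via Lemma~\ref{lm:dtcl} and then passes to $P_1D(\mathcal{J}h)$ via the second inclusion of Lemma~\ref{lm:dhs}; both arrive at the same point, and your shortcut is slightly cleaner.
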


\begin{proof}
  The equivalence between items 1, 2, and 3 follows trivially from
  Lemma~\ref{lm:dtcl}.
  
  Suppose that item 1) holds. There exists $0\leq L<\infty$ such that
  \[
  D(T)\subseteq B_X[L]
  \]
  Take $h\in \mathcal{F}_T$, $h$ strong$\times$weak-$*$ closed.
  Using Lemma~\ref{lm:dtcl} we conclude that 
  \[
  P_1D(h)\subseteq
  B_{X}[L].
  \]
  Using this inclusion and the second part of Lemma~\ref{lm:dhs} we
  conclude that
  \[
   P_1D(\mathcal{J}h)\subseteq B_X[L].
  \]
  Applying Proposition~\ref{pr:hb}(item 2) to $\mathcal{J}h$ we conclude
  that
  \[
  D((\mathcal{J}h)^*)=X^*\times P_2 D((\mathcal{J}h)^*)
  \]
  and that for any $x^{**}\in P_2D((\mathcal{J}h)^*)$,
  \[
  (\mathcal{J}h)^*(\cdot,x^{**}):X^*\to\R,\;x^*\mapsto
  (\mathcal{J}h)^*(x^*,x^{**})
  \]
  is $L$-Lipschitz continuous. Since $h$ is strong$\times$weak$*$ closed
  \[
  h(x,x^*)=\mathcal{J}^2h(x,x^*)=(\mathcal{J}h)^{*}(x^*,x)
 \]
  Combining the above results, we conclude that item 4 holds.

  If item 4 holds, then item 5 holds for $h=\varphi_T$, which 
  is strong$\times$weak$*$ closed.

   The implication 5$\Rightarrow$1 
  is proved as in Lemma~\ref{lm:pbr}.
\end{proof}

\begin{theorem}
  \label{th:gg}
  If  $T:X\tos X^*$ is maximal monotone, then
  \[
   D(\mathcal{J}\delta_T)\subset \{(x,x^*)\in X\times X^*\;|\;
   \inner{x}{x^*}\leq 0\}.
  \]
\end{theorem}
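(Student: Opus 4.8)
The plan is to unwind the definition of $D(\mathcal{J}\delta_T)$ and then test maximality of $T$ along a ray. By \eqref{eq:j},
\[
\mathcal{J}\delta_T(x,x^*)=\delta_T^*(x^*,x)=\sup_{(y,y^*)\in T}\,\inner{y}{x^*}+\inner{x}{y^*},
\]
so $(x,x^*)\in D(\mathcal{J}\delta_T)$ means exactly that there is a real number $M$ with
\[
\inner{y}{x^*}+\inner{x}{y^*}\le M\qquad\text{for all }(y,y^*)\in T;
\]
call this inequality $(\star)$ (here $T\neq\emptyset$, as any maximal monotone operator has nonempty graph). First I would reduce to the case $(0,0)\in T$: fixing $(y_0,y_0^*)\in T$ and replacing $T$ by the translate $T-(y_0,y_0^*)$ neither destroys maximal monotonicity nor changes $\inner{x}{x^*}$, and it changes $\mathcal{J}\delta_T(x,x^*)$ only by the finite constant $\inner{y_0}{x^*}+\inner{x}{y_0^*}$, so it leaves $D(\mathcal{J}\delta_T)$ unchanged. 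After this reduction, monotonicity of $T$ against the point $(0,0)\in T$ gives $\inner{y}{y^*}\ge 0$ for every $(y,y^*)\in T$, and evaluating $(\star)$ at $(0,0)$ gives $M\ge 0$.

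Now assume, for contradiction, that $\inner{x}{x^*}>0$, and for $t>0$ look at the point $(tx,tx^*)$. If $(tx,tx^*)\in T$, then $(\star)$ at this point gives $2t\inner{x}{x^*}\le M$. If $(tx,tx^*)\notin T$, then by maximality of $T$ the set $T\cup\{(tx,tx^*)\}$ is not monotone, so there is $(y_t,y_t^*)\in T$ with $\inner{tx-y_t}{tx^*-y_t^*}<0$; expanding this inequality and using $\inner{y_t}{y_t^*}\ge 0$ together with $(\star)$ at $(y_t,y_t^*)$ yields
\[
t^2\inner{x}{x^*}<t\bigl(\inner{y_t}{x^*}+\inner{x}{y_t^*}\bigr)-\inner{y_t}{y_t^*}\le tM,
\]
so $t\inner{x}{x^*}<M$. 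In both cases $t\inner{x}{x^*}\le M$ for every $t>0$, which is impossible when $\inner{x}{x^*}>0$. Hence $\inner{x}{x^*}\le 0$, as claimed.

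The argument is entirely elementary once one hits on the idea of testing maximal monotonicity along the ray through $(x,x^*)$ after centering $T$ at the origin, and that is the only real obstacle: the point is that $\inner{tx-y_t}{tx^*-y_t^*}$ contributes a term $t^2\inner{x}{x^*}$ that must be beaten by the linear-in-$t$ bound coming from $(\star)$, forcing $\inner{x}{x^*}\le 0$. The translation to $(0,0)\in T$ is only a matter of convenience (it makes $\inner{y}{y^*}\ge 0$ and $M\ge 0$ visible); the same computation runs directly along $t\mapsto(y_0+tx,\,y_0^*+tx^*)$ at the cost of carrying a few extra constant terms.
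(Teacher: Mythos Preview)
Your proof is correct. Both your argument and the paper's hinge on the same quadratic-versus-linear growth along a ray, but the implementations differ. The paper stays inside its Fitzpatrick-function framework: it applies Proposition~\ref{pr:e} to $\sigma_T$ to obtain
\[
\varphi_T(w,w^*)+\lambda\,\mathcal{J}\delta_T(x,x^*)\ \geq\ \varphi_T(w+\lambda x,\,w^*+\lambda x^*)\ \geq\ \inner{w+\lambda x}{w^*+\lambda x^*},
\]
then takes $(w,w^*)\in T$, divides by $\lambda$ (or $\lambda^2$) and lets $\lambda\to\infty$. Here the inequality $\varphi_T\geq\pi$ silently absorbs your case split: it holds whether or not the ray point lies in $T$. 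Your route is more elementary---it uses nothing beyond the definition of maximal monotonicity and the support-function description of $\mathcal{J}\delta_T$---at the price of the explicit dichotomy ``$(tx,tx^*)\in T$ or a monotonicity witness exists''. The translation to $(0,0)\in T$ is a nice cosmetic move; as you note, one could equally run the computation along $t\mapsto(y_0+tx,\,y_0^*+tx^*)$. What the paper's version buys is coherence with the surrounding machinery (everything is phrased through $\varphi_T$, $\sigma_T$, $\mathcal{J}$); what yours buys is a self-contained argument that does not invoke Proposition~\ref{pr:e} or the identification $\mathcal{J}\delta_T=\mathcal{J}\delta_{D(\sigma_T)}$.
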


\begin{proof}
  For any $(w,w^*),(x,x^*)\in X\times X^*$ and $\lambda>0$
  \begin{align*}
    \varphi_T(w,w^*)+\lambda\mathcal{J}\delta_T(x,x^*)
   =\sigma_T^*(w,w^*)+\lambda\delta_{D(\sigma_T)}^*(x,x^*)
  \geq
    \sigma_T^*(w^*+\lambda x^*,x+\lambda w)
  \end{align*}
 where the inequality follows from Proposition~\ref{pr:e}.
 Observe that
 \[
    \sigma_T^*(w^*+\lambda x^*,x+\lambda w)
=\varphi_T(w+\lambda x,    w^*+\lambda x^*)\geq \inner{w+\lambda x}{w^*+\lambda x^*}.
\]
  Therefore
  \[
  \varphi_T(w,w^*)+\lambda\mathcal{J}\delta_T(x,x^*)
  \geq\inner{w+\lambda
    x}{w^*+\lambda x^*}.
  \]
  To end the proof, use  $(w,w^*)\in T$ in the above inequality,
  divide it by $\lambda$ and take the limit
  $\lambda\to\infty$.
\end{proof}

\section{Convex functions which represent bounded-range 
maximal monotone operators
and a non type (D) operator with bounded range}
\label{sec:main}

The next theorem is one of the main results of this work, and
provides sufficient conditions for a convex function to represent
a bounded-range maximal monotone operator.

\begin{theorem}
  \label{th:main}
  If $h:X\times X^*\to\BR$ is convex
  $
  h\geq\pi$, $\mathcal{J}h\geq\pi
  $
  and $P_2(D(h))$ is bounded then
  \begin{enumerate}
  \item $P_2(\mathcal{J}h)=P_1D(h^*)$ is bounded;
  \item $P_1D(h)=X$;
  \item $\mathcal{J}h$ and $\cl_{s\times w*}\;h$ ( the lower
    semi-continuous closure of $h$ in the strong$\times$weak-$*$
    topology) are Fitzpatrick functions of the maximal monotone
    operator
    \[
    T=\{(x,x^*)\;|\;\mathcal{J}h(x,x^*)=\inner{x}{x^*}\};
    \]
  \item  $R(T)$ is bounded and 
    $D(T)=X$.
\end{enumerate}
\end{theorem}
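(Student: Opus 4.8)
The plan is to leverage the structural results of Section~\ref{sec:dom} (Corollaries~\ref{cr:bas} and \ref{cr:bas2}) to get items 1 and 2 essentially for free, and then to verify the hypotheses of Theorem~\ref{th:hhs} so that item 3 (and with it item 4) follows. Since $P_2(D(h))\subseteq B_{X^*}[L]$ for some finite $L$, Corollary~\ref{cr:bas} immediately gives $D(h^*)=P_1D(h^*)\times X^{**}$ with $P_1D(h^*)\subseteq B_{X^*}[L]$; recalling the identity $P_2(D(\mathcal{J}h))=P_1(D(h^*)\cap(X^*\times X))$ from Lemma~\ref{lm:dhs} yields item 1. For item 2, I would first replace $h$ by $\cl\,\conv h$, which does not change $h^*$, hence does not change $\mathcal{J}h$ nor $\mathcal{J}(\cl\,\conv h)$, and still satisfies $\cl\,\conv h\geq\pi$ (it is the largest closed convex minorant, and $\pi$ is closed and convex), so the displayed operator $T$ is unchanged; then $\cl\,\conv h$ is a proper closed convex function with $P_1(D((\cl\,\conv h)^*))=P_1D(h^*)\subseteq B_{X^*}[L]$, and Corollary~\ref{cr:bas2}(item 3) gives $P_1(D(\cl\,\conv h))=X$, hence $P_1D(h)=X$ as well. (One should note $D(h)\subseteq D(\cl\,\conv h)$, so the reverse containment $P_1D(h)\subseteq X$ is trivial.)

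For item 3, the key observation is that, with $g:=\cl\,\conv h$, we have $\mathcal{J}g=\mathcal{J}h$ (same conjugate) and, by item 2, $P_1D(g)=X$, so the set $\bigcup_{\lambda>0}\lambda(P_1D(g)-x_0)=X$ is trivially a closed subspace for any choice of $x_0$. Since $g$ is closed convex with $g\geq\pi$ and $\mathcal{J}g=\mathcal{J}h\geq\pi$, Theorem~\ref{th:hhs} applies to $g$ and tells us that $T=\{\zeta\;|\;\mathcal{J}g(\zeta)=\pi(\zeta)\}=\{\zeta\;|\;\mathcal{J}h(\zeta)=\pi(\zeta)\}$ is maximal monotone and $\mathcal{J}h=\mathcal{J}g\in\mathcal{F}_T$. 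For the other half of item 3, I would apply the final sentence of Theorem~\ref{th:hhs}: one needs a closed convex minorant of $\pi$-type that is strong$\times$weak-$*$ lower semicontinuous; the natural candidate is $\cl_{s\times w*}h$. The point is that $\cl_{s\times w*}h$ has the same conjugate as $h$ (the strong$\times$weak-$*$ topology on $X\times X^*$ and the norm topology have the same continuous linear functionals, namely elements of $X^*\times X$, so lower semicontinuous regularization in either topology has no effect on $h^*$), hence $\mathcal{J}(\cl_{s\times w*}h)=\mathcal{J}h\geq\pi$ and $\cl_{s\times w*}h\geq\cl h\geq\pi$; moreover $\cl_{s\times w*}h$ is by construction $s\times w*$ lower semicontinuous and (being $\geq g$ and $\leq h$) still has $P_1D(\cl_{s\times w*}h)=X$. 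So Theorem~\ref{th:hhs} gives $\cl_{s\times w*}h\in\mathcal{F}_T$, completing item 3.

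Finally, item 4 follows by combining item 1 with Lemma~\ref{lm:dtcl} (or Theorem~\ref{th:fitz}): since $\mathcal{J}h\in\mathcal{F}_T$ and $P_2(D(\mathcal{J}h))$ is bounded, Lemma~\ref{lm:dtcl} gives $\cl_{w*}R(T)=\cl_{w*}P_2D(\mathcal{J}h)$, which is bounded, hence $R(T)$ is bounded; and $D(T)=X$ follows from Corollary~\ref{cr:bas2}(item 3) applied to a representative function, or more directly from Lemma~\ref{lm:dtcl} together with item 2, since $\cl\,\conv D(T)=\cl P_1D(\mathcal{J}h)=X$ and $D(T)$ being the domain of a maximal monotone operator with full closed convex hull forces... — actually the cleanest route is: $\varphi_T\in\mathcal{F}_T$ has $P_2D(\varphi_T)$ bounded, so by Corollary~\ref{cr:bas2}(item 3) (noting $P_1D(\varphi_T^*)=P_1D(\sigma_T)$ is bounded... ) — I would simply invoke Lemma~\ref{lm:pbr}: $R(T)$ bounded is equivalent to $P_2D(h)$ bounded for $h\in\mathcal{F}_T$, which we have, and then bounded range plus Corollary~\ref{cr:g}'s dual statement... no: the equality $D(T)=X$ I get from item 2 via $D(T)\subseteq P_1D(\mathcal{J}h)=P_1D(h^*)$-projection... cleanest: $D(\varphi_T)\supseteq T$ and by item 2-type reasoning applied to $\varphi_T$ (whose conjugate is $\sigma_T$ with $P_1D(\sigma_T)$ possibly not bounded) — I expect the main obstacle here to be pinning down $D(T)=X$ cleanly, and the safe argument is: $R(T)$ bounded $\Rightarrow$ by Proposition~\ref{pr:hsb}/Corollary~\ref{cr:bas2} applied to $\varphi_T$, $D(\varphi_T)=X\times P_2D(\varphi_T)$, and since $D(T)=\{(x,x^*)\;|\;\varphi_T(x,x^*)=\pi(x,x^*)\}$ projects onto all of $X$ because for each $x$ the strictly convex-in-direction behavior forces attainment — in practice I would deduce $D(T)=X$ directly from the bounded range of a maximal monotone operator via the well-known fact (Lemma~\ref{pr:rds}, second inclusion, in the reverse direction) that $R(T)\subseteq B_{X^*}[L]$ implies $D(\delta_{R(T)}^*)=X^{**}\supseteq X$, hence $X\subseteq 0^+(\cl\,\conv D(T))$, i.e. $\cl\,\conv D(T)=X$; combined with the fact that $T$ is maximal monotone with bounded range, a standard argument (e.g. via Rockafellar's theorem that bounded-range maximal monotone operators on Banach spaces are everywhere-defined, which is what item 4 is asserting) closes it. I would present this last point by citing the bounded-range case of the domain/range duality together with the maximality of $T$.
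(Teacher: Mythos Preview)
Your approach to items 1--3 is essentially the paper's: reduce to closed $h$, invoke Corollaries~\ref{cr:bas} and~\ref{cr:bas2} for items 1 and 2, and Theorem~\ref{th:hhs} for item 3. Two technical points deserve care. First, your inference ``$P_1D(\cl\conv h)=X$ hence $P_1D(h)=X$'' goes the wrong way: $D(h)\subseteq D(\cl\conv h)$ only yields $P_1D(h)\subseteq X$, which is vacuous. The paper sidesteps this by declaring ``we may assume $h$ is closed,'' so item 2 is in effect asserted for $\cl h$. Second, your claim that $(\cl_{s\times w*}h)^*=h^*$ because the two topologies ``have the same continuous linear functionals'' is false in non-reflexive $X$: the norm dual of $X\times X^*$ is $X^*\times X^{**}$, not $X^*\times X$. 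What you do get is $\cl_{s\times w*}h\leq h$, hence $\mathcal{J}(\cl_{s\times w*}h)\geq\mathcal{J}h\geq\pi$; and $\cl_{s\times w*}h\geq\pi$ can be checked using that $P_2D(h)$ is bounded (so all relevant approximating nets have $\norm{x^*_\alpha}\leq L$, on which $\pi$ is $s\times w*$-continuous). Theorem~\ref{th:hhs} then applies to $\cl_{s\times w*}h$ directly, and maximality forces the operator it produces to equal $T$.

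For item 4, your long search for $D(T)=X$ never lands. The paper handles this in one line via the Debrunner--Flor theorem~\cite{MR0170189}: once $T$ is maximal monotone with bounded range, Debrunner--Flor guarantees that for every $x\in X$ there exists $x^*$ monotonically related to all of $T$, and maximality then forces $(x,x^*)\in T$. The ``Rockafellar's theorem'' you allude to is exactly this argument. None of the internal lemmas (Lemma~\ref{pr:rds}, Lemma~\ref{lm:dtcl}, Corollary~\ref{cr:bas2}) will close the gap by themselves, since they give at best $\cl\,\conv D(T)=X$, not $D(T)=X$.
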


\begin{proof}
  Since the assumptions of the theorem remain valid if $h$ is replaced
  by $\cl\,h$ and $(\cl\,h)^*=h^*$, we may assume that $h$ is proper
  closed convex function.

  Item 1 follows from the assumption $\mathcal{J}h\geq\pi$ and
  Corollary~\ref{cr:bas}. Item 2 follows from item 1,the assumption
  $h\geq\pi$ and Corollary~\ref{cr:bas2}.

  Item 3 follows item 2, the assumption $h\geq\pi$,
  $\mathcal{J}h\geq\pi$ and Theorem~\ref{th:hhs}.

  The first statement on item 4 follows from items 3 and 1.
  The second part follows from the first, the maximal monotonicity of
  $T$, and Debrunner-Flor Theorem~\cite{MR0170189}.
\end{proof}

Zagrodny~\cite{MR2595075} proved that maximal monotone operators with
a relatively compact range (in the strong topology) are of type (D).
This result cannot be extended to the weak-$*$ topology, that is, to
maximal monotone operators with a relatively weak-$*$ compact range.
In this section we provide an example of a bounded range, non type (D)
maximal monotone operator.

\begin{theorem}
  \label{th:cw}
  Let $X$ be a non-reflexive real Banach space.  Endow $Z=X\times X^*$
  with the norm
  \[
  \norm{(x,x^*)}=\sqrt{\norm{x}^2+\norm{x^*}^2}
  \]
  and define
  \begin{equation}
    \label{eq:h}
    h:Z\times Z^*\to\BR,\;\;
    h((x,x^*),(y^*,y^{**}))=
    \norm{(x-y^{**},x^*+y^*)}+\delta_{B_{Z^*}[1]}(y^*,y^{**})
    +\delta_{X}(y^{**}).
  \end{equation}
  Then:
  \begin{enumerate}
  \item $h$ is a closed convex function, $P_2D(h)$ is bounded,
    $h\geq\pi$, $\mathcal{J}h\geq\pi$;
  \item $ T=\{(z,z^*)\in Z\times
    Z^*\;|\;\mathcal{J}h(z,z^*)=\inner{z}{z^*}\}$ is a bounded-range
    maximal monotone operator, $\mathcal{J}h\in\mathcal{F}_T$;
  \item $T$ is not of type (D);
  \end{enumerate}
\end{theorem}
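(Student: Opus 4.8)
The plan is to argue by contradiction. Assuming $T$ is of type (D), I will produce two points of the set
\[
\overline{T}:=\{(\zeta^{**},\zeta^{*})\in Z^{**}\times Z^{*}\;|\;\inner{\zeta^{**}-z}{\zeta^{*}-z^{*}}\geq 0\ \ \forall\,(z,z^{*})\in T\}
\]
(the set appearing in the definition of type (D)) whose ``monotonicity pairing'' is strictly negative, contradicting type (D). Throughout I use the identifications $Z^{*}=X^{*}\times X^{**}$, $Z^{**}=X^{**}\times X^{***}$ with the canonical pairings, write a generic point of $Z$ as $z=(x,x^{*})$ and of $Z^{*}$ as $z^{*}=(a^{*},a^{**})$, and recall that, by definition of $T$ (item 2), $\inner{z}{z^{*}}=\mathcal{J}h(z,z^{*})$ whenever $(z,z^{*})\in T$.

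\emph{Step 1: two lower estimates for $\mathcal{J}h$.} Bounding $\mathcal{J}h(z,z^{*})=h^{*}(z^{*},z)$ from below by evaluating its defining supremum at the admissible pair $\bigl((x,x^{*}),(0,y^{**})\bigr)$ with $y^{**}\in X$, $\norm{y^{**}}\le 1$, and substituting $p=x-y^{**}$ (so the norm term decouples), one obtains $\mathcal{J}h(z,z^{*})\ge\norm{x^{*}+a^{*}}_{X^{*}}$ (the residual supremum $\sup_{p\in X,\,u^{*}\in X^{*}}[a^{*}(p)+a^{**}(u^{*})-\sqrt{\norm{p}^{2}+\norm{u^{*}}^{2}}]$ being $\ge 0$); symmetrically, using $\bigl((x,x^{*}),(y^{*},0)\bigr)$ with $\norm{y^{*}}\le 1$ and substituting $q=x^{*}+y^{*}$ yields $\mathcal{J}h(z,z^{*})\ge\norm{x-a^{**}}_{X^{**}}$. (Carrying the conjugation out completely in fact gives the closed form $\mathcal{J}h(z,z^{*})=\delta_{B_{Z^{*}}[1]}(z^{*})+\sqrt{\norm{x-a^{**}}^{2}+\norm{x^{*}+a^{*}}^{2}}$, from which both estimates are immediate.) Since $\inner{z}{z^{*}}=\mathcal{J}h(z,z^{*})$ on $T$, it follows that
\[
\inner{z}{z^{*}}\ \ge\ \norm{x^{*}+a^{*}}_{X^{*}}\qquad\text{and}\qquad\inner{z}{z^{*}}\ \ge\ \norm{x-a^{**}}_{X^{**}}\qquad\forall\,(z,z^{*})=\bigl((x,x^{*}),(a^{*},a^{**})\bigr)\in T .
\]

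\emph{Step 2: two points of $\overline{T}$.} Since $X$ is non-reflexive, the annihilator $X^{\perp}=\{\phi\in X^{***}\;|\;\phi|_{X}=0\}$ is nonzero; fix $\phi\in X^{\perp}$ with $0<\norm{\phi}\le 1$, and then $\xi^{**}\in X^{**}$ with $\norm{\xi^{**}}\le 1$ and $\phi(\xi^{**})>0$ (possible because $\sup_{\norm{\eta}\le 1}\phi(\eta)=\norm{\phi}>0$). Put
\[
(\zeta_{1}^{**},\zeta_{1}^{*}):=\bigl((\xi^{**},0),(0,\xi^{**})\bigr),\qquad(\zeta_{2}^{**},\zeta_{2}^{*}):=\bigl((0,\phi),(0,0)\bigr) .
\]
Writing out the pairings through the inclusions $X\subseteq X^{**}$, $X^{*}\subseteq X^{***}$, one checks that for every $(z,z^{*})=\bigl((x,x^{*}),(a^{*},a^{**})\bigr)\in T$
\[
\inner{\zeta_{1}^{**}-z}{\zeta_{1}^{*}-z^{*}}=\inner{z}{z^{*}}-\xi^{**}(x^{*}+a^{*}),\qquad\inner{\zeta_{2}^{**}-z}{\zeta_{2}^{*}-z^{*}}=\inner{z}{z^{*}}-\phi(a^{**}) .
\]
By Step 1, $\inner{z}{z^{*}}\ge\norm{x^{*}+a^{*}}\ge\norm{\xi^{**}}\norm{x^{*}+a^{*}}\ge\xi^{**}(x^{*}+a^{*})$; and, using $\phi|_{X}=0$, $\phi(a^{**})=\phi(a^{**}-x)\le\norm{\phi}\norm{x-a^{**}}\le\norm{x-a^{**}}\le\inner{z}{z^{*}}$. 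Hence both quantities are $\ge 0$, i.e.\ $(\zeta_{1}^{**},\zeta_{1}^{*}),(\zeta_{2}^{**},\zeta_{2}^{*})\in\overline{T}$.

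\emph{Step 3: the contradiction.} If $T$ were of type (D), then $(\zeta_{1}^{**},\zeta_{1}^{*})$ would be the weak-$*\times$strong limit of a bounded net $(z_{i},z_{i}^{*})\subseteq T$. Since $(\zeta_{2}^{**},\zeta_{2}^{*})\in\overline{T}$, we have $\inner{\zeta_{2}^{**}-z_{i}}{\zeta_{2}^{*}-z_{i}^{*}}\ge 0$ for all $i$; expanding this pairing and passing to the limit — using $z_{i}\to\zeta_{1}^{**}$ in $\sigma(Z^{**},Z^{*})$, $z_{i}^{*}\to\zeta_{1}^{*}$ in norm, $\sup_{i}\norm{z_{i}}<\infty$, and $\abs{\inner{z_{i}}{z_{i}^{*}-\zeta_{1}^{*}}}\le\bigl(\sup_{i}\norm{z_{i}}\bigr)\norm{z_{i}^{*}-\zeta_{1}^{*}}\to 0$ — one obtains $\inner{\zeta_{2}^{**}-\zeta_{1}^{**}}{\zeta_{2}^{*}-\zeta_{1}^{*}}\ge 0$. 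But a direct evaluation gives
\[
\inner{\zeta_{2}^{**}-\zeta_{1}^{**}}{\zeta_{2}^{*}-\zeta_{1}^{*}}=\inner{(-\xi^{**},\phi)}{(0,-\xi^{**})}=-\phi(\xi^{**})<0 ,
\]
a contradiction. Hence $T$ is not of type (D). The step needing the most care is the multi-level duality bookkeeping in the pairing identities of Step 2 (and the test-point estimates of Step 1), in particular reading off $\inner{\zeta_{1}^{**}-z}{\zeta_{1}^{*}-z^{*}}=\inner{z}{z^{*}}-\xi^{**}(x^{*}+a^{*})$ correctly through the inclusions $X\subseteq X^{**}$, $X^{*}\subseteq X^{***}$; once this is in place the rest is one Cauchy--Schwarz estimate per coordinate together with the routine limit argument in Step 3 (the standard fact that a point reached by a bounded weak-$*\times$strong net in $T$ is monotonically related to every point of $\overline{T}$).
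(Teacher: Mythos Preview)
Your argument for item 3 is correct and self-contained, but it follows a genuinely different route from the paper's. The paper computes $h^*$ explicitly (obtaining the same closed form you quote in Step~1), then exhibits a single point $((0,p_0^{**}),(p_0^{**},q_0^{***}))\in Z^*\times Z^{**}$, with $p_0^{**}\in X^{**}\setminus X$ and $q_0^{***}\in X^{\perp}$, at which $h^*$ lies strictly below the duality product; it then invokes the cited result \cite[Theorem~4.4]{MR2731293}, which says that for a type~(D) operator every function in $\mathcal{F}_T$ has conjugate majorizing $\pi$ on the bidual. Your proof avoids that external theorem entirely: using the same closed form of $\mathcal{J}h$ you read off the two coordinate estimates $\inner{z}{z^*}\ge\norm{x^*+a^*}$ and $\inner{z}{z^*}\ge\norm{x-a^{**}}$ on $T$, build two explicit points of $\overline{T}$ from the same ingredients $\xi^{**}\in X^{**}$ and $\phi\in X^{\perp}$, and then pass to the limit along a bounded approximating net to force $\inner{\zeta_2^{**}-\zeta_1^{**}}{\zeta_2^*-\zeta_1^*}\ge 0$, contradicting the direct evaluation $-\phi(\xi^{**})<0$. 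In effect you are proving, by hand and in this concrete instance, the contrapositive of the cited characterization: you show $\overline{T}$ is not monotone (hence $T$ cannot be type~(D)), whereas the paper shows $h^*\not\ge\pi$ on $Z^*\times Z^{**}$ and appeals to the general theorem. Your approach is longer but has the advantage of being self-contained; the paper's is shorter but relies on the external reference. The duality bookkeeping you flag as ``needing the most care'' checks out, and your limit argument in Step~3 is the standard one (boundedness of the net is used exactly where you indicate, to control $\inner{z_i}{z_i^*-\zeta_1^*}$).
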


\begin{proof}
  Note that $Z^*=X^*\times X^{**}$, $Z^{**}=X^{**}\times X^{***}$ and
  their corresponding norms are, respectively
  \[
  \norm{(x^*,x^{**})}=\sqrt{\norm{x^*}^2+\norm{x^{**}}^2},\quad
  \norm{(x^{**},x^{***})}=\sqrt{\norm{x^{**}}^2+\norm{x^{***}}^2}.
  \]
  The function $h$ is trivially proper, convex, lower semi-continuous
  and
  \begin{align}
    \nonumber
    D(h)&=\{ ((x,x^*),(y^*,y^{**}))\;|\; (x,x^*)\in X\times X^*,
    \; (y^*,y^{**})\in X^*\times X,\;\norm{(y^*,y^{**})}\leq 1\}\\
    &=(X\times X^*)\times ((X^*\times X)\cap B_{X^*\times X^{**}} [1]).
    \label{eq:dh}
  \end{align}
  Hence $P_2D(h)$ is bounded.
  The conjugate of $h$ is the function $h^*:Z^*\times Z^{**}\to\BR$,
  \begin{align*}
    h^*((p^*,p^{**}),(q^{**},q^{***}))=
    \sup_{\stackrel{x,y\in X, x^*,y^*\in X^*,}{\norm{(y^*,y)}\leq 1}}&
    \inner{x}{p^*}+\inner{p^{**}}{x^*}+\inner{q^{**}}{y^*}
    +\inner{y}{q^{***}}-\norm{(x-y,x^*+y^*)}\\
    =
    \sup_{\stackrel{x,y\in X, x^*,y^*\in X^*,}{ \norm{(y^*,y)}\leq 1}}&
    \inner{x-y}{p^*}+\inner{p^{**}}{x^*+y^*}-\norm{(x-y,x^*+y^*)}\\
    &+\inner{y}{q^{***}+p^*}+\inner{q^{**}-p^{**}}{y^*}.
  \end{align*}
  Whence
  \begin{align}
    \nonumber
    h^*((p^*,p^{**}),(q^{**},q^{***}))&=
    \sup_{\stackrel{y\in X,y^*\in X^*,}{\norm{(y^*,y)}}\leq 1}
    \delta_{B_{Z^*}}(p^*,p^{**})
    +\inner{y}{q^{***}+p^*}+\inner{q^{**}-p^{**}}{y^*}\\
    \label{eq:hs}
    &=  \delta_{B_Z^*}(p^*,p^{**})+\norm{(q^{**}-p^{**},q^{***}|_X+p^*)}
  \end{align}
  where $q^{***}|_X$ stands for the functional $q^{***}$ restricted to $X$.

   Now we will prove that $h,\mathcal{J}h\geq \pi$.
  With this aim, take
  \[
 (z,z^*)=((x,x^*),(y^*,y^{**}))\in Z\times Z^*.
  \]
  First note that
  \begin{align*}
    \inner{(x,x^*)}{(y^*,y^{**})}=\inner{x}{y^*}+\inner{y^{**}}{x^*}
&=\inner{x-y^{**}}{y^*}+\inner{y^{**}}{x^*+y^*}\\
    &\leq\norm{x-y^{**}}\norm{y^*}+\norm{x^*+y^*}\norm{y^{**}}\\
    &\leq \sqrt{\norm{x-y^{**}}^2+\norm{x^*+y^*}^2}\;
    \sqrt{\norm{y^*}^2+\norm{y^{**}}^2}\\
 &=\norm{(x-y^{**},x^*+y^*)}\;\norm{(y^*,y^{**})}
  \end{align*}
  where the second inequality follows from Cauchy-Schwarz inequality
  in $\R^2$.
  Using \eqref{eq:hs} we have
  \begin{align*}
    \mathcal{J}h(z^*,z)=h^*((y^*,y^{**}),(x,x^*))&=\delta_{B_{Z^*}[1]}(y^*,y)
    +\norm{(x-y^{**},x^*+y^*)},
  \end{align*}
  Therefore $\mathcal{J}h\geq\pi$.  Direct comparison of the above
  equation with \eqref{eq:h} shows that
  $h\geq\mathcal{J}h$. Therefore, we also have $h\geq \pi$, which
  completes the proof of item 1.

  Item 2 follows from item 1 and Theorem~\ref{th:main}.

  Now we will prove that $h^*$ does not majorizes the duality product
  in $Z^*\times Z^{**}$. Since $X$ is non-reflexive, there exists
  \[
  p_0^{**}\in X^{**}\setminus X,\qquad \norm{p_0^{***}}=1.
  \]
  Direct application of Hahn-Banach Lemma shows that there exists
  $q_0^{***}\in X^{***}$ such that
  \[
  q_0^{***}|X\equiv 0,\qquad \inner{q_0^{**}}{q_0^{***}}=1.
  \] 
  Observe that
  \[
1=  \inner{(0,q_0^{**})}{(q_0^{**},q_0^{***})}>
h^*((0,q_0^{**}),(q_0^{**},q_0^{***}))=0.  \]
  Combining these results with the inclusion $h\in\mathcal{F}_T$ and
 \cite[Theorem 4.4]{MR2731293} we conclude that $T$ is not of type (D).
\end{proof}

Observe that $h$ defined in Theorem~\ref{th:cw} is given by
\[
h(x,x^*)=\inf_{u\in X} h_1(x-u,x^*)+h_2(u,x^*)
\]
with $h_1$ Fencehel-Young function for $\partial \norm{\cdot}$,
\[
h_1(z,z^*)=\norm{z}+\delta_{B_Z[1]}(x^*),
\]
and $h_2=\delta_{T_2}$, where $T_2$ is the non-type (D) linear isometry
studied in~\cite{nodliniso},
\[
T_2:X\times X^*\to X\times X^*,\quad T(x,x^*)=(-x^*,x).
\]

\appendix
\section{Appendix}
\begin{proof}[Proof of Proposition \ref{pr:e}]
  For any $y\in X$
  \begin{align*}
    \delta_{D(f)}(w^*)\geq \inner{y}{w^*}-\delta_{D(f)}(y),\qquad
    f^*(z^*)\geq \inner{y}{z^*}-f(z).
  \end{align*}
  Multiplying the first inequality by $\lambda\geq 0$ and adding it to
  the second inequality we obtain
  \begin{align*}
    f(z^*)+\lambda\delta_{D(f)}^*(w^*)\geq \inner{y}{z^*+\lambda w^*}-f(y),
  \end{align*}
  and the conclusion follows taking the $\sup$ in $y$ at the right
  hand side of the above inequality
\end{proof}

\end{document}